\newcommand{\FF}{\mathcal{F}}
\newcommand{\KK}{\mathcal{K}}
\title{Maximum shattering}
\date{}
\author{Noga Alon}
\email{nalon@math.princeton.edu}
\author{Varun Sivashankar}
\email{varunsiva@princeton.edu}
\author{Daniel G. Zhu}
\email{zhd@princeton.edu}
\address{Department of Mathematics, Princeton University, Princeton, NJ 08544, USA}
\thanks{The first author was supported by NSF grant DMS-2154082. The third author was supported by the NSF Graduate Research Fellowships Program (NSF grant DGE-2039656).}
\begin{document}
\begin{abstract}
A family $\mathcal{F}$ of subsets of $[n]=\{1,2,\ldots,n\}$ shatters a set $A \subseteq [n]$ if for every $A' \subseteq A$ there is an $F \in \mathcal{F}$ such that $F \cap A=A'$. We develop a framework to analyze $f(n,k,d)$, the maximum possible number of subsets of $[n]$ of size $d$ that can be shattered by a family of size $k$. Among other results, we determine $f(n,k,d)$ exactly for $d \leq 2$ and show that if $d$ and $n$ grow, with both $d$ and $n-d$ tending to infinity, then, for any $k$ satisfying $2^d \leq k \leq (1+o(1))2^d$, we have $f(n,k,d)=(1+o(1))c\binom{n}{d}$, where $c$, roughly $0.289$, is the probability that a large square matrix over $\mathbb{F}_2$ is invertible. This latter result extends work of Das and M\'esz\'aros. As an application, we improve bounds for the existence of covering arrays for certain alphabet sizes.
\end{abstract}

\maketitle

\section{Introduction}
Let $\FF \subseteq 2^{[n]}$ be a family of subsets of
$[n]=\{1,2, \ldots ,n\}$. 
A set $A \subseteq [n]$ is \emph{shattered} by $\FF$
if $\{A \cap S : S \in \FF \} = 2^{A}$, that is,
if for every subset $A' \subseteq A$ there exists some $F \in \FF$ 
such that $F \cap A = A'$. The well-known
Sauer-Perles-Shelah lemma \cite{Sa,She} states that 
if $|\FF| > \sum_{i=0}^{d-1} \binom{n}{i}$ then $\FF$ shatters at
least one set of size at least $d$. A slightly stronger result,
first proved by Pajor \cite{Paj} (see also \cite{ARS}), asserts that
any family $\FF$ shatters at least $|\FF|$ distinct subsets. This
implies the previous statement since if more than 
$ \sum_{i=0}^{d-1} \binom{n}{i}$ subsets are shattered, then at
least one of these subsets has size greater than $d-1$. Combining
this result with the Kruskal-Katona Theorem \cite{Kr,Ka},
it is possible to determine, for all $n$, $k$, and $d$, the 
minimum possible number of 
subsets of size $d$ of $[n]$ that are shattered by any 
family of $k$ distinct subsets of $[n]$ (see \cref{sec:minshatter} of this paper for more details).

Our objective in the present paper is to study the opposite question: 
given positive integers $n$, $k$, and $d$, what is the \emph{largest} 
number of 
subsets of size $d$ that a family of sets $\FF \subseteq 2^{[n]}$ of
size at most $k$ can shatter?\footnote{The sole purpose of specifying $\abs{\FF} \leq k$ instead of $\abs{\FF} = k$ is to prevent $f(n,k,d)$ from being undefined when $k > 2^n$.} Denote this number
by $f(n,k,d)$. It is clear that this number is $0$ if $k<2^d$ or $n < d$ 
and that for every fixed $d$ it is weakly increasing in $n$ and in $k$. Moreover,
\[f(n, k, 1) = \begin{cases}
	0 & k = 1 \\
	n & k \geq 2
\end{cases}\]
by
placing both $\emptyset$ and $[n]$ in $\FF$. 
Thus, the
interesting cases are those where
$d \geq 2$, $k \geq 2^d$, and $n\geq d$.

In this paper, we first demonstrate that the exact values of $f(n,k,2)$ follow from work of Kleitman and Spencer \cite{KS} on pairwise independent sets. In the case where $k = 2^d$, Das and M\'esz\'aros~\cite{DM18} obtained the following bound:
\begin{thm}[\cite{DM18}]\label{t12}
For any $n \geq d \geq 1$, we have
\[c_d \binom{n}{d} \leq f(n, 2^d, d) \leq \frac{c_d n^d}{d!},\]
where
\[c_d = \frac{(2^d-2)(2^d-4) \cdots (2^d-2^{d-1})}{(2^d-1)^{d-1}}\]
is the probability that $d$ independent uniformly random vectors in $\setf_2^d \setminus \set{0}$ are linearly independent. Moreover, if $n$ is a multiple of $2^d-1$, equality holds in the upper bound.
\end{thm}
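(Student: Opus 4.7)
My plan is a probabilistic construction. Assign to each $i \in [n]$ an i.i.d.\ uniform random vector $v_i \in \setf_2^d \setminus \set{0}$, and define $\FF = \set{F_u : u \in \setf_2^d}$ with $F_u = \set{i \in [n] : u \cdot v_i = 1}$, so $\abs{\FF} \leq 2^d$. A set $A = \set{a_1, \ldots, a_d}$ is shattered iff the linear map $u \mapsto (u \cdot v_{a_1}, \ldots, u \cdot v_{a_d})$ is a bijection of $\setf_2^d$, equivalently iff $v_{a_1}, \ldots, v_{a_d}$ are linearly independent---an event of probability exactly $c_d$. Linearity of expectation then yields a realization shattering at least $c_d \binom{n}{d}$ $d$-sets.

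\textbf{Upper bound reduction.} Translate so that $\emptyset \in \FF$; then $A = \set{a_1, \ldots, a_d}$ is shattered iff the map $\sigma_A \colon \FF \to \set{0,1}^d$ sending $F \mapsto (\chi_F(a_1), \ldots, \chi_F(a_d))$ is a bijection. Summing over orderings of $A$ and noting that a repeated index automatically precludes bijectivity, sampling $a_1, \ldots, a_d$ i.i.d.\ uniformly from $[n]$ gives
\[d! \cdot \#\set{\text{shattered $d$-sets}}/n^d = \Pr_{a_1, \ldots, a_d \in [n]}[\sigma_A \text{ bijective}].\]
This probability depends on $\FF$ only through the atom distribution $\mu$ on $\set{0,1}^\FF$ with $\mu(\phi) = \abs{\set{i \in [n] : (\chi_F(i))_{F \in \FF} = \phi}}/n$. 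The upper bound therefore reduces to the purely combinatorial claim: for any probability measure $\mu$ on subsets of a $2^d$-element set $S$, the probability that $d$ i.i.d.\ samples from $\mu$ \emph{separate} $S$ (i.e.\ induce a bijection $S \to \set{0,1}^d$) is at most $c_d$.

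\textbf{Main obstacle and equality.} The extremal bound on $\mu$ is the crux. When $S \cong \setf_2^d$ and $\mu$ is uniform on the $2^d - 1$ nonzero linear functionals, separation becomes linear independence, yielding exactly $c_d$; this should be the optimum. The central difficulty is that the optimum is attained at a highly non-symmetric distribution---the fully symmetric uniform distribution on all $\binom{2^d}{2^{d-1}}$ balanced Boolean functions achieves a strictly smaller value already for $d \geq 3$---so naive symmetrization arguments cannot succeed. A promising strategy is to first restrict $\mu$ to balanced functions (forced since separation requires balance), then combine a Lagrange/KKT analysis at the linear distribution with a combinatorial argument ruling out alternative critical points. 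Tightness for $n = m(2^d - 1)$ follows by taking $\FF$ to be a $d$-dimensional subspace $V \subseteq \setf_2^n$ in which each nonzero coordinate functional on $V$ is realized by exactly $m$ elements of $[n]$; a direct enumeration gives $\frac{m^d}{d!} \prod_{k=0}^{d-1}(2^d - 2^k) = c_d n^d / d!$ shattered $d$-subsets, matching the upper bound.
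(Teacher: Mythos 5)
Your lower bound and your equality-case construction are correct and essentially identical to the paper's (the paper phrases the random assignment of nonzero vectors $v_i\in\setf_2^d\setminus\set{0}$ as sampling columns from the Lagrangian-maximizing weight vector of the hypergraph $H(2^d,d)$, and realizes the exact construction for $n=m(2^d-1)$ via \cref{lem:linalg} with $V=S=\setf_2^d$). Your reduction of the upper bound to a statement about an atom distribution $\mu$ on $\set{0,1}^{\FF}$ is also correct, and is precisely the paper's \cref{l21}: the quantity you must bound is $d!$ times the Lagrangian of $H(2^d,d)$.

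The genuine gap is that this reduced claim---that no distribution $\mu$ separates with probability exceeding $c_d$---is the entire content of the theorem, and you do not prove it; you only observe that symmetrization fails and propose a ``Lagrange/KKT analysis \ldots{} ruling out alternative critical points'' without carrying it out. That route is not obviously viable: the Lagrangian polynomial of $H(2^d,d)$ lives in $2^{2^d}$ variables and classifying its critical points directly is much harder than the bound itself. The paper instead argues by induction on $d$: taking a maximizer of minimal support, every two support columns must lie in a common edge, which forces each support column to be balanced and any two to be ``pairwise independent''; converting $0/1$ entries to $\pm1$ makes the support columns mutually orthogonal and orthogonal to the all-ones vector, so the support has size at most $2^d-1$. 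The link of a support column, after identifying columns agreeing on the $2^{d-1}$ zero-rows, is $H(2^{d-1},d-1)$, so induction bounds the contribution of edges through a column of weight $x_v$ by $x_v\cdot\frac{c_{d-1}}{(d-1)!}(1-x_v)^{d-1}$, and the scalar inequality $\sum_i p_i(1-p_i)^{d-1}\le\bigl(\frac{2^d-2}{2^d-1}\bigr)^{d-1}$ for distributions on at most $2^d-1$ atoms (\cref{l25}) closes the induction via $c_d=c_{d-1}\bigl(\frac{2^d-2}{2^d-1}\bigr)^{d-1}$. You would need to supply this (or the paper's alternative via Erd\H{o}s's degree-majorization lemma) for your proof to be complete.
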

Whenever $n \gg d^2$, the quantities $\binom{n}{d}$ and $\frac{n^d}{d!}$ are quite close, so \cref{t12} gives tight bounds. In particular, it implies that for fixed $d$,
\[f(n, 2^d, d) = (1 + o(1)) \frac{c_d n^d}{d!}.\]

For smaller values of $n$ compared to $d$, we can still get tight bounds at the expense of requiring that $d$ and $n-d$ grow.
\begin{thm}
	\label{t13}
	If $d$ and $n$ grow, with both $d$ and $n-d$ tending to infinity,
	then $f(n,2^d,d)=(1+o(1))c \binom{n}{d}$, where $c = \lim_{d \to\infty}c_d = \prod_{i=1}^\infty(1-2^{-i}) \approx 0.289$.
\end{thm}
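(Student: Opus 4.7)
The lower bound is immediate from \cref{t12}: $f(n, 2^d, d) \geq c_d \binom{n}{d}$, and the product formula for $c_d$ yields $c_d \to c$ as $d \to \infty$, giving $f(n, 2^d, d) \geq (1 - o(1)) c \binom{n}{d}$.

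For the upper bound, I would first rewrite the bound of \cref{t12} as $f(n, 2^d, d) \leq c_d \binom{n}{d} \prod_{i=0}^{d-1}(1 - i/n)^{-1}$, using $n^d/d! = \binom{n}{d}/\prod_{i=0}^{d-1}(1 - i/n)$. When $d^2 = o(n)$, the correction factor is $1 + o(1)$, immediately giving $(1 + o(1)) c \binom{n}{d}$. The real work is in the regime where $d^2/n$ is bounded below but $n - d \to \infty$; here \cref{t12} is off from the target by a constant factor.

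In this regime, the plan is to reduce to the case where $\FF$ is an affine subspace of $\mathbb{F}_2^n$, then analyze that case directly. The motivation is that a set $S$ is shattered by $\FF$ iff it meets every element of the difference set $\mathcal{D}(\FF) = \{F \triangle F' : F \neq F' \in \FF\}$, and $|\mathcal{D}(\FF)| \geq 2^d - 1$ with equality precisely when $\FF$ is affine; heuristically, a smaller $\mathcal{D}$ admits more $d$-subset covers, so the extremal family should be affine. For affine (WLOG linear after a shift) $\FF = V$ of dimension $d$ with generator matrix $G \in \mathbb{F}_2^{d \times n}$, the shattered $d$-subsets are precisely those $S$ for which $G_S$ is invertible, reducing the question to counting bases of a rank-$d$ $\mathbb{F}_2$-representable matroid on $[n]$; this can be handled via Lagrange multipliers on the column-multiplicity distribution together with the identity $c_d = \Pr[\mathbb{F}_2^{d\times d}\text{ matrix is invertible}]$.

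The main obstacle is making the reduction to affine $\FF$ rigorous. The ``smaller $\mathcal{D}$, more covers'' heuristic does not formally upper-bound arbitrary $\FF$ by affine families, since $\mathcal{D}(\FF)$ for non-affine $\FF$ need not contain any $d$-dimensional linear subspace of $\mathbb{F}_2^n$. A plausible route is a compression or shifting argument that modifies $\FF$ toward an affine subspace while preserving or increasing the shattered count, or alternatively a refined double-counting that directly yields the affine estimate without first carrying out an explicit reduction.
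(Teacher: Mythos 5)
Your lower bound and your treatment of the regime $d^2 = o(n)$ are fine and match the easy part of the argument. But in the hard regime the proposal has a genuine gap, which you acknowledge yourself: the reduction of an arbitrary extremal family $\FF$ to an affine subspace is only a heuristic, and no compression/shifting argument achieving it is given (nor is one known; the inequality $\abs{\mathcal{D}(\FF)} \geq 2^d-1$ says nothing about how many $d$-sets can hit all of $\mathcal{D}(\FF)$, since that count is not governed by $\abs{\mathcal{D}(\FF)}$ alone). Moreover, even if you could assume $\FF$ is a $d$-dimensional subspace with generator matrix $G \in \mathbb{F}_2^{d \times n}$, the proposed analysis "via Lagrange multipliers on the column-multiplicity distribution" would reproduce a bound of the shape $c_d n^d/d!$, because any such continuous relaxation over column frequencies ignores the constraint that the $d$ chosen column positions are distinct; closing the gap between $n^d/d!$ and $\binom{n}{d}$, which is a factor $e^{\Theta(d^2/n)}$ and is the entire difficulty here, is exactly what this relaxation cannot do (note also that the bound genuinely fails without $n-d\to\infty$, e.g.\ $f(n,2^n,n)=1$, so any correct argument must use that hypothesis).

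The paper's mechanism for the hard regime is different and elementary: the inequality $f(n,k,d+1) \leq \frac{n}{d+1} f(n-1,\floor{k/2},d)$ (\cref{l24}), proved by fixing a column $v$, keeping only the at most $\floor{k/2}$ rows in which $v$ has its majority-complement symbol, and deleting $v$. Iterating this $r$ times with $r$ chosen so that $1 \ll (d-r)^2 \ll n-d$ reduces to parameters $(n-r, 2^{d-r}, d-r)$ where the $n^{d-r}$-type bound of \cref{l21} is within a factor $1+o(1)$ of the binomial coefficient, and since $k=2^d$ halves exactly to $2^{d-r}$, the constant is $c_{d-r} = (1+o(1))c$ because $d-r\to\infty$ (this is packaged as \cref{lem:gammainfty} together with the computation $c(2^d,d)=c_d$). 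If you want to complete your write-up, you should either supply a genuine proof of the affine reduction plus a basis-counting bound of the form $(1+o(1))c\binom{n}{d}$ valid whenever $n-d\to\infty$, or adopt a dimension-reduction step of the above kind; as it stands, the key step in the only nontrivial regime is missing.
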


To prove \cref{t13}, we develop a more general theory of the function $f(n,k,d)$ based on determining the Lagrangians of relevant hypergraphs, which also reproduces \cref{t12}. Although we do not have matching upper and lower bounds when $k > 2^d$, we are nonetheless able to prove structural results concerning continuity and the types of asymptotic growth encountered in various regimes. One particular consequence is the following strengthening of \cref{t13}:
\begin{thm}
	\label{t13a}
	If $d \geq 1$, $k \geq 2^d$, and $n\geq d$ are growing positive integers such that $d$ and $n-d$ tend to infinity and $k = (1 + o(1))2^d$, then
 $f(n,k,d)=(1+o(1))c \binom{n}{d}$.
\end{thm}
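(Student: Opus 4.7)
The lower bound is immediate: by the monotonicity of $f(n, k, d)$ in $k$ noted in the introduction and \cref{t13},
\[f(n, k, d) \geq f(n, 2^d, d) = (1 + o(1))c\binom{n}{d}.\]

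For the upper bound, take $\FF \subseteq 2^{[n]}$ of size $k = 2^d + t$ with $t = o(2^d)$ shattering $M$ $d$-subsets, and aim to show $M \leq (1+o(1))c\binom{n}{d}$. The natural approach is to find a size-$2^d$ subfamily of $\FF$ that still shatters nearly all of these $d$-sets, reducing to \cref{t13}. For each shattered $A$, the pattern multiplicities $a_{A'}(A) := \abs{\{F \in \FF : F \cap A = A'\}}$ for $A' \subseteq A$ satisfy $a_{A'}(A) \geq 1$ and $\sum_{A' \subseteq A} a_{A'}(A) = k$, so at most $t$ of them exceed $1$. Any size-$2^d$ subfamily $\FF' \subseteq \FF$ that shatters $A$ must meet all $2^d$ pattern classes, and since $\abs{\FF'} = 2^d$ this forces exactly one representative per class, yielding precisely $\prod_{A' \subseteq A} a_{A'}(A)$ such shattering subfamilies. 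Double-counting pairs $(A, \FF')$ with $A$ shattered by $\FF'$ then yields
\[\sum_{A \text{ shattered by } \FF} \prod_{A' \subseteq A} a_{A'}(A) \leq \binom{k}{2^d} f(n, 2^d, d).\]

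The main obstacle is a matching lower bound on the left-hand side in terms of $M$. The trivial $\prod_{A'} a_{A'}(A) \geq t+1$ (attained when one multiplicity equals $t+1$ and the rest equal $1$) only gives $M \leq \binom{k}{2^d}/(t+1) \cdot f(n, 2^d, d)$, which blows up by a factor of order $2^{dt}/(t \cdot t!)$ and is far too weak, even for $t = 1$. A sharper bound would exploit that since at most $t$ of the $2^d$ multiplicities exceed $1$, for ``most'' shattered $A$ the product $\prod_{A'} a_{A'}(A)$ ought to lie near its AM--GM maximum $(1 + t/2^d)^{2^d} \approx e^t$; carrying this out requires a careful convexity/concentration argument on the distribution of the $a_{A'}(A)$. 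A more robust route, better matching the paper's overall methodology, is to invoke the continuity and Lagrangian-theoretic structural results developed in the body: together with \cref{t13}, these should show that the normalized quantity $f(n, k, d)/\binom{n}{d}$, viewed as a function of $k/2^d$ in the asymptotic regime $d, n-d \to \infty$, is continuous at the value $1$ with limit $c$, which immediately yields the desired upper bound.
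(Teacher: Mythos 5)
Your lower bound is fine (and not circular in substance, since $f(n,2^d,d)\ge c_d\binom nd\ge c\binom nd$ already follows from the lower bound of \cref{t12}). The upper bound, however, has a genuine gap. Your first route --- double-counting pairs $(A,\FF')$ over size-$2^d$ subfamilies --- fails for exactly the reason you identify: $\binom{k}{2^d}=\binom{2^d+t}{t}$ is of order $(2^d/t)^t$ while $\prod_{A'}a_{A'}(A)$ is at most $(1+t/2^d)^{2^d}\approx e^t$, so even the best conceivable version of this inequality loses an unbounded factor. Your fallback ("invoke the continuity and Lagrangian-theoretic structural results\dots these should show\dots") is a gesture at the conclusion rather than an argument; in particular, "continuity at $1$" of the limiting normalized shattering density is essentially the statement being proved, not a tool available for proving it.

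The paper's proof supplies two ingredients that your proposal is missing. First, \cref{l24} reduces $f(n,k,d)$ to $f(n-1,\lfloor k/2\rfloor,d-1)$: halving $k$ while decrementing $d$ keeps the ratio $b=k/2^d$ essentially fixed, and iterating this $r$ times with $1\ll (d-r)^2\ll n-d$ simultaneously handles the excess in $k$ and the regime where $n-d$ is small compared to $d^2$ (where the crude bound $f\le \lambda(H(k,d))n^d$ of \cref{l21} exceeds $\binom nd$ by a factor $\exp(\Theta(d^2/n))$ --- a difficulty your reduction-to-$k=2^d$ plan does not touch, and which is why \cref{t13} is itself derived from this machinery rather than being available as a black box). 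Second, one needs that $\gamma_\infty(b)=\lim_d d!\,\lambda(H(\lfloor 2^db\rfloor,d))$ is \emph{right}-continuous at $b=1$ (\cref{prop:rcon}); this is exactly the direction forced by $k\ge 2^d$, and it is proved not by abstract continuity but by exploiting that each $\gamma_d$ is a weakly increasing step function, hence locally constant to the right, together with the monotone decrease $\gamma_{d+1}\le\gamma_d$ from \cref{cor:2ckd}. Without these two steps the upper bound does not go through.
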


Finally, we connect the results of this paper to the theory of covering arrays. In addition to reproducing many results in the literature, we are able to improve the best-known bounds for the existence of covering arrays for certain alphabet sizes, the smallest of which are $35$, $40$, and $45$.

\subsection*{Outline} \cref{sec:general}
develops a framework for analyzing the asymptotics of $f(n,k,d)$. 
In \cref{sec:pairs} we derive the 
precise value of $f(n,k,2)$ for all $n$ and $k$ by
combining a result of Kleitman and Spencer with Tur\'an's Theorem. \cref{sec:k2d} deals with the case $k = 2^d$, and quickly derives \cref{t12,t13,t13a} from more general results.
Several bounds for the case $k>2^d$ are discussed in \cref{sec:kg2d}, and the final \cref{sec:final}
contains some concluding remarks, including the application to covering arrays.

\section{The Asymptotic Structure of \texorpdfstring{$f(n,k,d)$}{f(n, k, d)}} \label{sec:general}
\subsection{Shattering hypergraphs and Lagrangians}
A family of sets $\FF \subseteq 2^{[n]}$ of size at most $k$ can be
represented 
by a $k \times n$ binary matrix, where the rows are the
indicator vectors of the sets $S \in \FF$.
Say a $k \times d$ binary matrix is \emph{shattered} if each of
the $2^d$ possible rows appears among its $k$ rows. 
Then, $f(n,k,d)$ is the maximum
possible number
of shattered $k \times d$ submatrices of a binary $k \times n$
matrix.

With this interpretation in mind, we make the following definition, generalizating a construction in \cite{DM18}:
\begin{defn}
For integers $d \geq 1$ and $k \geq 2^d$, we define $H(k,d)$ to be the $d$-uniform hypergraph with vertex set $\set{0,1}^k$, i.e.\ the set of binary vectors of length $k$. A collection of $d$ such vectors forms an edge if and only if the $k \times d$ matrix with these vectors as columns is shattered.
\end{defn}

We also recall the definition of the Lagrangian of a hypergraph, first considered by Frankl and F\"uredi
\cite{FF} and by Sidorenko \cite{Si},
extending the application of this notion for graphs, initiated
by Motzkin and Straus \cite{MS}.
\begin{defn}
The \emph{Lagrangian polynomial} of a $d$-uniform hypergraph $H$ is the polynomial
\[
P_H((x_v)_{v \in H})=\sum_{e \in E(H)} \prod_{v \in e} x_j.
\]
The \emph{Lagrangian} $\lambda(H)$ of $H$ is the maximum value of
$P_H$ over the simplex $\{(x_v)_{v \in H} : x_v \geq 0, \sum_v
x_v=1\}$, which necessarily exists as the simplex is compact.
\end{defn}
We now relate $f(n,k,d)$ to the above definitions.
\begin{lem}
	\label{l21}
	For integers $n \geq d \geq 1$ and $k \geq 2^d$,
	\[d! \lambda(H(k,d)) \binom{n}{d} \leq f(n,k,d) \leq \lambda(H(k,d)) n^d.\]
	Equality holds in the upper bound if and only if $P_H$ is maximized at a point where all coordinates are in $\frac{1}{n}\setz$.
\end{lem}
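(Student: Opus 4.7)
The plan is to interpret $f(n,k,d)$ as the maximum value of $n^d P_{H(k,d)}(x)$ over points $x$ on the standard simplex with coordinates in $\frac{1}{n}\setz$, so that both bounds reduce to standard facts about the Lagrangian.

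I would begin with the matrix interpretation already spelled out in the excerpt: a family $\FF$ with $\abs\FF \leq k$ is encoded by a $k \times n$ binary matrix, padding with duplicate rows if $\abs\FF < k$ (this does not alter what is shattered). A $d$-subset $S \subseteq [n]$ is then shattered iff the $d$ corresponding columns, viewed in $\set{0,1}^k$, are pairwise distinct and form an edge of $H(k,d)$. Writing $n_v$ for the number of columns equal to $v$ and setting $x_v = n_v/n$, the shattering count is exactly
\[
\sum_{e \in E(H(k,d))}\, \prod_{v \in e} n_v \;=\; n^d\, P_{H(k,d)}(x),
\]
where $x$ lies in the standard simplex with coordinates in $\frac{1}{n}\setz$; the column-distinctness requirement is handled automatically by the fact that edges of $H(k,d)$ consist of distinct vertices.

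The upper bound $f(n,k,d) \leq \lambda(H(k,d))\, n^d$ is then immediate from the definition of $\lambda$, and equality holds precisely when some maximizer of $P_{H(k,d)}$ on the simplex has all coordinates in $\frac{1}{n}\setz$, which is the stated equality condition.

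For the lower bound I would run a standard probabilistic argument: fix $p^*$ with $P_{H(k,d)}(p^*) = \lambda(H(k,d))$ and sample each of the $n$ columns i.i.d.\ from the distribution $p^*$ on $\set{0,1}^k$. Summing $\prod_{v \in e} p^*_v$ over the $d!$ orderings of each edge gives $\Pr[S \text{ shattered}] = d!\, \lambda(H(k,d))$ for every fixed $S \in \binom{[n]}{d}$, so linearity of expectation yields an expected shattering count of $d! \binom{n}{d} \lambda(H(k,d))$ and some realization achieves at least this. I do not anticipate a genuine obstacle: once the shattering count is recognized as $n^d P_{H(k,d)}$ evaluated at the column-frequency vector, both bounds are essentially forced. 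The only point needing explicit verification is that allowing repeated rows in the matrix does not change the shattering count, which follows immediately from the padding observation.
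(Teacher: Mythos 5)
Your proposal is correct and follows essentially the same route as the paper: the upper bound by evaluating the Lagrangian polynomial at the column-frequency vector (with the equality case read off from whether a maximizer lies in $\frac{1}{n}\setz$), and the lower bound by sampling columns i.i.d.\ from a maximizing distribution and using linearity of expectation. The only addition, the padding-with-duplicate-rows remark, is a harmless detail the paper leaves implicit.
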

\begin{proof}
	For the upper bound, suppose $M$ is a $k \times n$ binary matrix with 
	$f(n,k,d)$ shattered $k \times d$ submatrices. 
	For each $v\in \set{0,1}^k$, let $m_v$ denote the number of
	columns of $M$ which are equal to $v$ and define a weight
	$x_v=m_v/n$. These weights are clearly nonnegative and their sum
	is $1$. We claim that \[P_{H(k,d)}((x_v)_{v \in \set{0,1}^k})=f(n,k,d)/n^d,\] which will show the bound and the equality case by the definition of the Lagrangian.  Indeed, every edge $\set{v_1, v_2, \ldots ,v_d}$ of 
	$H(k,d)$
	contributes to the left-hand side exactly $m_{v_1}m_{v_2}\cdots m_{v_d}/n^d$, which is precisely the number of (shattered) 
	$k \times d$ submatrices 
	of $M$ with columns $v_1, v_2, \ldots ,v_d$ (in any order), divided by $n^d$. Summing over all edges of $H(k,d)$ yields the desired upper bound.
	
	For the lower bound, suppose $\bm{x}=(x_v)_{v \in \set{0,1}^k}$ is such that $P_{H(k,d)}(\bm{x}) = \lambda(H(k,d))$. Let $M$ be a $k \times n$ random binary matrix obtained by independently picking
		each column to be $v \in \set{0,1}^k$ with probability $x_v$. Given any $k \times d$ submatrix of $M$, the probability that it is shattered is $d! \lambda(H(k,d))$, since for every $\set{v_1, v_2, \ldots ,v_d} \in E(H(k,d))$ the probability that the matrix has columns $v_1, v_2, \ldots, v_d$ in some order is exactly $d! x_{v_1}x_{v_2} \cdots x_{v_d}$. Thus, by linearity of expectation, the expected number of shattered $d \times k$ submatrices of $M$ is $d! \lambda(H(k,d)) \binom{n}{d}$. 
\end{proof}

By fixing $k$ and $d$, and letting $n$ grow to infinity, we obtain the following:
\begin{cor} \label{cor:ckd}
For integers $d \geq 1$ and $k \geq 2^d$, we have $f(n,k,d) = (1 + o(1))\lambda(H(k,d)) n^d$.
\end{cor}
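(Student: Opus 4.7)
The plan is to derive this directly from \cref{l21}, which already sandwiches $f(n,k,d)$ between $d!\lambda(H(k,d))\binom{n}{d}$ and $\lambda(H(k,d)) n^d$. Since $k$ and $d$ are fixed while $n \to \infty$, the quantity $\lambda(H(k,d))$ is a fixed constant, so the only thing left is to compare the two sandwiching expressions.

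First I would observe that the upper bound of \cref{l21} is already $\lambda(H(k,d))n^d$ on the nose, so no work is needed on that side. For the lower bound, I would expand $d!\binom{n}{d} = n(n-1)(n-2)\cdots(n-d+1)$ and note that, with $d$ fixed, this is a polynomial in $n$ of degree $d$ with leading coefficient $1$, so $d!\binom{n}{d} = n^d - \binom{d}{2}n^{d-1} + O(n^{d-2}) = (1-o(1))n^d$ as $n \to \infty$. Multiplying by the constant $\lambda(H(k,d))$ preserves the asymptotic, yielding
\[
(1-o(1))\lambda(H(k,d))n^d \;\leq\; f(n,k,d) \;\leq\; \lambda(H(k,d))n^d,
\]
which is exactly the stated claim.

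There is essentially no obstacle here: the corollary is purely a restatement of \cref{l21} once one notices that $d!\binom{n}{d}/n^d \to 1$ for fixed $d$. The only mild subtlety worth mentioning in the write-up is that the $o(1)$ is uniform in nothing beyond the fixed $k, d$ regime — if one later wants a statement in which $k$ or $d$ also grows (as in \cref{t13,t13a}), this direct corollary no longer suffices because the error term $\binom{d}{2}/n$ in $d!\binom{n}{d}/n^d$ is not negligible unless $d^2 = o(n)$. That caveat is what forces the more refined arguments in the later sections, but for \cref{cor:ckd} itself, the two-line derivation above is all that is required.
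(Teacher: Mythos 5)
Your proof is correct and matches the paper's own derivation, which likewise obtains the corollary by fixing $k$ and $d$ in \cref{l21} and letting $n \to \infty$, using $d!\binom{n}{d} = (1-o(1))n^d$. Your added caveat about the error term when $d$ grows is accurate and indeed explains why the later sections need a more careful argument.
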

In what follows, we let $c(k,d) = d! \lambda(H(k,d))$.

\subsection{Relating different choices of \texorpdfstring{\boldmath $(k,d)$}{(k, d)}}
We start by remarking that since $f(n,k,d)$ is weakly increasing in $k$, the quantity $c(k,d)$ must also be weakly increasing in $k$. Another relation comes from the following simple property of the function $f(n,k,d)$:
\begin{lem}
	\label{l24}
 For integers $d \geq 1$, $k \geq 2^{d+1}$, and $n \geq d+1$, we have
	\[
	f(n, k, d+1) \leq \frac{n}{d+1} f(n-1, \lfloor k/2 \rfloor, d).
	\]
\end{lem}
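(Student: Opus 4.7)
The plan is to prove the inequality via a double-counting argument on incidences between elements of $[n]$ and shattered $(d+1)$-sets, using a standard ``split by containing/not containing a vertex'' trick.

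Concretely, take a family $\FF \subseteq 2^{[n]}$ with $|\FF|\leq k$ that shatters $f(n,k,d+1)$ subsets of $[n]$ of size $d+1$. For each element $a \in [n]$, partition $\FF = \FF_a \sqcup \FF_{\bar a}$ according to whether $a \in F$. The key observation is that if $A$ is a shattered $(d+1)$-set containing $a$, then $A \setminus \{a\}$ is shattered by \emph{both} $\FF_a$ and $\FF_{\bar a}$: given any $A' \subseteq A \setminus \{a\}$, shattering of $A$ by $\FF$ produces some $F \in \FF$ with $F \cap A = A'$, which lies in $\FF_{\bar a}$, and some $F' \in \FF$ with $F' \cap A = A' \cup \{a\}$, which lies in $\FF_a$; both satisfy $F \cap (A\setminus\{a\}) = F' \cap (A\setminus\{a\}) = A'$.

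Since $|\FF_a| + |\FF_{\bar a}| = |\FF| \leq k$, at least one of the two has size at most $\lfloor k/2 \rfloor$; call it $\GG_a$. Projecting $\GG_a$ to $[n] \setminus \{a\}$ by deleting $a$ from each set does not change the intersections with $A \setminus \{a\}$, so the projection is a family of at most $\lfloor k/2\rfloor$ subsets of an $(n-1)$-element set that shatters $A \setminus \{a\}$ for every shattered $(d+1)$-set $A$ containing $a$. By the definition of $f$, the number of such $A$ is therefore at most $f(n-1, \lfloor k/2 \rfloor, d)$.

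Finally, I double-count the set $\{(a, A) : a \in A,\ A \text{ is a shattered } (d+1)\text{-subset of }[n]\}$. Summing over $a$ gives at most $n \cdot f(n-1,\lfloor k/2\rfloor, d)$, while summing over $A$ gives exactly $(d+1) f(n,k,d+1)$, and dividing yields the claim. The only place one must be a little careful is verifying that $\lfloor k/2\rfloor \geq 2^d$ so that $f(n-1,\lfloor k/2\rfloor,d)$ is a meaningful (nonzero) quantity and the hypothesis $k \geq 2^{d+1}$ in the lemma statement is used precisely to ensure this; otherwise no substantial obstacle arises.
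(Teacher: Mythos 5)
Your proof is correct and is essentially the paper's argument, translated from the binary-matrix language into set-family language: fixing an element $a$, restricting to the half of $\FF$ determined by membership of $a$ (of size at most $\lfloor k/2\rfloor$), observing it still shatters $A\setminus\{a\}$, and double-counting incidences. The only cosmetic difference is that you explicitly verify both halves work and pick the smaller, where the paper assumes without loss of generality that the rows where the fixed column is zero form the smaller half.
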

\begin{proof}
	Let $M$ be a $k \times n$ binary matrix with $f(n,k,d+1)$ shattered
	$k \times (d+1)$ submatrices. Let $v$ be a fixed column of $M$;
	without loss of generality assume that the number of zeros
	it contains, $k'$, is at most $\lfloor k/2 \rfloor$. Let $M'$ be the 
	submatrix of $M$ consisting of all $k'$ rows of $M$ in which $v$
	has a zero, and all columns besides $v$. Note that for every 
	$k \times (d+1)$ shattered submatrix of $M$ that contains $v$, the
	corresponding $k' \times d$ submatrix of $M'$ must be shattered.
	Therefore, the number of shattered $k \times (d+1)$ submatrices of
	$M$ that contain $v$ is at most $f(n-1,\lfloor k/2 \rfloor, d)$.
	Summing over all columns $v$ of $M$, each shattered $k \times (d+1)$
	submatrix
	is counted $d+1$ times, implying the desired result.
\end{proof}
Applying \cref{cor:ckd} implies the following:
\begin{cor} \label{cor:2ckd}
For integers $d \geq 1$ and $k \geq 2^{d+1}$, we have $c(k, d+1) \leq c(\floor{k/2}, d)$.
\end{cor}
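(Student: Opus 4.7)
The plan is to derive this corollary as a direct asymptotic consequence of \cref{l24} by letting $n \to \infty$ while holding $k$ and $d$ fixed. Since \cref{cor:ckd} gives the first-order asymptotic $f(n, k, d) = (1 + o(1))\lambda(H(k,d)) n^d$, the inequality on the $f$'s should rescale cleanly into an inequality on $\lambda$'s, and therefore on $c$'s via the definition $c(k,d) = d! \lambda(H(k,d))$.

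Concretely, I would divide both sides of the bound from \cref{l24},
\[
f(n, k, d+1) \leq \frac{n}{d+1} f(n-1, \lfloor k/2 \rfloor, d),
\]
by $n^{d+1}$, obtaining
\[
\frac{f(n, k, d+1)}{n^{d+1}} \leq \frac{1}{d+1} \cdot \frac{f(n-1, \lfloor k/2 \rfloor, d)}{n^d}.
\]
Then I would apply \cref{cor:ckd} on both sides. On the left, the quotient tends to $\lambda(H(k, d+1))$. On the right, I would write
\[
\frac{f(n-1, \lfloor k/2 \rfloor, d)}{n^d} = \frac{f(n-1, \lfloor k/2 \rfloor, d)}{(n-1)^d} \cdot \left(\frac{n-1}{n}\right)^d,
\]
whose first factor tends to $\lambda(H(\lfloor k/2 \rfloor, d))$ and whose second factor tends to $1$ (with $d$ fixed). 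Passing to the limit yields $\lambda(H(k, d+1)) \leq \frac{1}{d+1}\lambda(H(\lfloor k/2 \rfloor, d))$, which upon multiplying by $(d+1)!$ becomes exactly $c(k, d+1) \leq c(\lfloor k/2 \rfloor, d)$.

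This argument is essentially mechanical, so I do not anticipate a genuine obstacle; the only care required is bookkeeping the factorial factors relating $c(k,d)$ to $\lambda(H(k,d))$ and confirming that the polynomial correction $((n-1)/n)^d$ vanishes in the limit because $d$ is held constant. The simplicity is a feature rather than a bug: it shows that the weak monotonicity of $c(k,d)$ in $k$ combines with a single shattering-argument input (\cref{l24}) to yield an interaction between the $k$ and $d$ parameters for free.
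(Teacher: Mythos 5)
Your proof is correct and is exactly the argument the paper intends: the corollary is stated as an immediate consequence of applying \cref{cor:ckd} to \cref{l24}, and your limit computation (including the factorial bookkeeping) fills in precisely those routine details.
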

One helpful way to conceptualize this bound is to define, for every positive integer $d$, the weakly increasing step function $\gamma_d \colon [1, \infty) \to \setr$ given by $\gamma_d(b) = c(\floor{2^d b}, d)$. Then, \cref{cor:2ckd} rewrites as $\gamma_{d+1}(b) \leq \gamma_d(b)$. In particular, by the monotone convergence theorem, the limit $\gamma_\infty(b) \coloneq \lim_{d \to\infty} \gamma_d(b)$ exists. An illustration of this situation is shown in \cref{fig:1}.

\begin{figure}[tbp]
\centering
\begin{tikzpicture}[scale=5,null/.style={inner sep=0pt},ce/.style={inner sep=1pt,circle,draw,fill},oe/.style={ce,fill=white}]
\draw[->] (0.8,0)--(0.8,1.1);
\draw[->] (0.8,0)--(2.5,0) node[null,label={right:$b$}]{};

\draw[dotted] (0.8,1)--(1,1) (1,0)--(1,1) (1.5,0)--(1.5,0.75) (2,0)--(2,0.88) (1.25,0)--(1.25,0.38) (1.75,0)--(1.75,0.65) (2.25,0)--(2.25,0.8);

\draw (0.8,1) node[null,label={left:$1$}]{} +(-0.5pt,0)--+(0.5pt,0);
\draw (1,0) node[null,label={below:$1$}]{} +(0,-0.5pt)--+(0,0.5pt);
\draw (1.25,0) node[null,label={below:$\frac{9}{8}$}]{} +(0,-0.5pt)--+(0,0.5pt);
\draw (1.5,0) node[null,label={below:$\frac{5}{4}$}]{} +(0,-0.5pt)--+(0,0.5pt);
\draw (1.75,0) node[null,label={below:$\frac{11}{8}$}]{} +(0,-0.5pt)--+(0,0.5pt);
\draw (2,0) node[null,label={below:$\frac{3}{2}$}]{} +(0,-0.5pt)--+(0,0.5pt);
\draw (2.25,0) node[null,label={below:$\frac{13}{8}$}]{} +(0,-0.5pt)--+(0,0.5pt);

\draw[thick,red!80!black] (1,1)node[ce]{}--(2.4,1) node[null,label={right:$\gamma_1$}]{};
\draw[thick,green!60!black] (1,0.5)node[ce]{}--+(0.5,0)node[oe]{} (1.5,0.75)node[ce]{}--+(0.5,0)node[oe]{} (2,0.88)node[ce]{}--+(0.4,0) node[null,label={right:$\gamma_2$}]{};
\draw[thick,blue] (1,0.3)node[ce]{}--+(0.25,0)node[oe]{} (1.25, 0.38)node[ce]{}--+(0.25,0)node[oe]{} (1.5,0.57)node[ce]{}--+(0.25,0)node[oe]{} (1.75,0.65)node[ce]{}--+(0.25,0)node[oe]{} (2,0.72)node[ce]{}--+(0.25,0)node[oe]{} (2.25,0.8)node[ce]{}--+(0.15,0) node[null,label={right:$\gamma_3$}]{};
\draw[thick,Purple] (1,0.1)node[ce]{}..controls(1.2,0.25) and (1.3,0.3)..(1.5,0.35)node[oe]{} (1.5,0.42)node[ce]{}..controls(1.7,0.55) and (2,0.65)..(2.4,0.72) node[null,label={right:$\gamma_\infty$}]{};
\end{tikzpicture}
\caption{An illustration of the relationship between $\gamma_1$, $\gamma_2$, $\gamma_3$, and $\gamma_\infty$. Aside from $\gamma_1$, which is easily seen to be constant at $1$, all values are for illustrative purposes only.}\label{fig:1}
\end{figure}
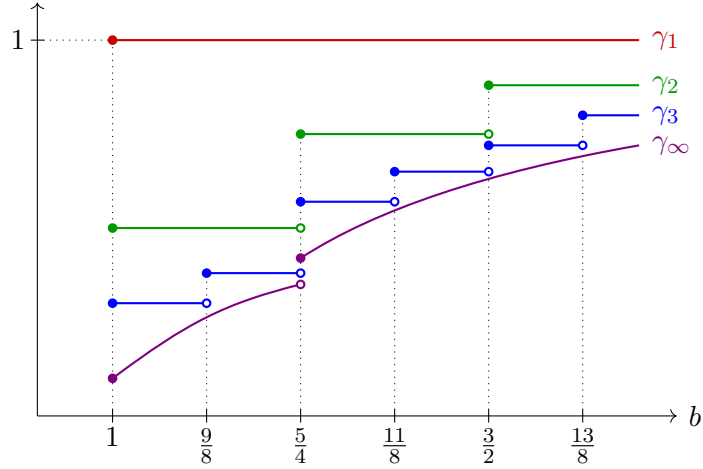

\begin{prop} \label{prop:rcon}
$\gamma_\infty$ is right-continuous.
\end{prop}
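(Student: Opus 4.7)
The plan is to exploit two facts: first, each $\gamma_d$ is itself a right-continuous step function whose jumps occur only at the dyadic rationals $m/2^d$; second, the sequence $(\gamma_d)$ decreases pointwise to $\gamma_\infty$, so $\gamma_\infty(b) \leq \gamma_d(b)$ for every $d$ and $b$.

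Fix $b_0 \in [1, \infty)$. Since $\gamma_\infty$ is the pointwise limit of weakly increasing functions, it is itself weakly increasing, and therefore $\gamma_\infty(b) \geq \gamma_\infty(b_0)$ for all $b \geq b_0$. Thus the only task is to show that $\gamma_\infty(b)$ does not exceed $\gamma_\infty(b_0)$ by much when $b$ is slightly larger than $b_0$.

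Given $\epsilon > 0$, use the definition of $\gamma_\infty$ to pick $d$ large enough that $\gamma_d(b_0) < \gamma_\infty(b_0) + \epsilon$. Because $\gamma_d(b) = c(\lfloor 2^d b \rfloor, d)$ only changes value as $b$ crosses an element of $\tfrac{1}{2^d} \mathbb{Z}$, one can choose $\delta > 0$ small enough that the interval $(b_0, b_0+\delta)$ contains no such element; then $\gamma_d$ is constant on $[b_0, b_0+\delta)$. For any $b$ in this interval,
\[\gamma_\infty(b) \leq \gamma_d(b) = \gamma_d(b_0) < \gamma_\infty(b_0) + \epsilon,\]
which together with $\gamma_\infty(b) \geq \gamma_\infty(b_0)$ yields $|\gamma_\infty(b) - \gamma_\infty(b_0)| < \epsilon$.

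I do not anticipate a real obstacle: the argument is essentially an interchange-of-limits observation, where monotonicity in $d$ lets us replace the elusive limit function by a well-behaved step approximant on a small right-neighborhood of $b_0$. The only point requiring care is that the jump set of $\gamma_d$ is discrete (so that $\delta$ can genuinely be chosen), which is immediate from the formula $\gamma_d(b) = c(\lfloor 2^d b \rfloor, d)$.
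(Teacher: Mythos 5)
Your proof is correct and follows essentially the same route as the paper's: pick $d$ with $\gamma_d(b_0)$ within $\epsilon$ of $\gamma_\infty(b_0)$, use the step-function structure of $\gamma_d$ to get a right-neighborhood where $\gamma_d$ is constant, and sandwich $\gamma_\infty$ there. The extra observations you include (monotonicity of $\gamma_\infty$ and the dyadic location of the jumps) are fine but not needed beyond what the paper already uses.
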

\begin{proof}
Take some $b \geq 1$ and $\eps > 0$. There must exist some $d$ where $\gamma_d(b) < \gamma_\infty(b) + \eps$. Since $\gamma_d$ is a step function, there exists some $\delta > 0$ such that $\gamma_d(b + \delta) = \gamma_d(b)$. Then for all $b < b' < b+\delta$, we have
\[\gamma_\infty(b') \leq \gamma_d(b') = \gamma_d(b) < \gamma_\infty(b) + \eps.\]
Since $\eps$ was arbitrary, $\gamma_\infty$ is right-continuous, as desired.
\end{proof}

The following lemma is now the appropriate generalization of \cref{t13a}.
\begin{lem} \label{lem:gammainfty}
Let $b \geq 1$ be a real number. If $d$, $k$, and $n$ are growing positive integers such that $d$ and $n-d$ tend to infinity, $k = (b + o(1))2^d$, and $k \geq b2^d$ always, then $f(n,k,d)=(\gamma_\infty(b)+o(1)) \binom{n}{d}$.
\end{lem}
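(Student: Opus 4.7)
The plan is to establish matching asymptotic bounds on $f(n,k,d)/\binom{n}{d}$: the lower bound will come directly from \cref{l21}, while the upper bound will come from iterating \cref{l24} down to an auxiliary base dimension $d_0$ and then letting $d_0 \to \infty$.

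For the lower bound, \cref{l21} gives
\[
	f(n,k,d) \ge c(k,d)\binom{n}{d} = \gamma_d(k/2^d)\binom{n}{d} \ge \gamma_d(b)\binom{n}{d},
\]
where the last step uses $k/2^d \ge b$ and the weak monotonicity of $\gamma_d$. Since $\gamma_d(b) \to \gamma_\infty(b)$ by definition, this gives $\liminf f(n,k,d)/\binom{n}{d} \ge \gamma_\infty(b)$.

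For the upper bound, the naive \cref{l21} estimate $f(n,k,d) \le c(k,d)\,n^d/d!$ overshoots the lower bound by a factor that is not $1+o(1)$ unless $d^2/n \to 0$, which our hypotheses do not guarantee. So I would fix an arbitrary $d_0 \ge 1$ and iterate \cref{l24} exactly $d-d_0$ times; this is valid at every step because $k \ge 2^d$ implies $\lfloor k/2^j\rfloor \ge 2^{d-j}$ (integer floor of a real that is at least an integer), so the hypothesis of \cref{l24} is preserved throughout. The iteration yields
\[
	f(n,k,d) \le \frac{n!\,d_0!}{d!\,(n-d+d_0)!}\,f\bigl(n-d+d_0,\,\lfloor k/2^{d-d_0}\rfloor,\,d_0\bigr).
\]
Applying the \cref{l21} upper bound at the fixed base level $d_0$ (where it is asymptotically tight in the column count) and dividing by $\binom{n}{d}$ gives
\[
	\frac{f(n,k,d)}{\binom{n}{d}} \le \gamma_{d_0}\!\bigl(\lfloor k/2^{d-d_0}\rfloor/2^{d_0}\bigr)\cdot\frac{(n-d+d_0)^{d_0}}{(n-d+1)(n-d+2)\cdots(n-d+d_0)}.
\]
For fixed $d_0$, the second factor tends to $1$ as $n-d \to \infty$. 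For the first, since $\gamma_{d_0}$ is a step function that jumps only at dyadics of denominator $2^{d_0}$, there is some $\delta > 0$ on which $\gamma_{d_0}$ is constantly $\gamma_{d_0}(b)$ on $[b,b+\delta)$; for large enough $d$, $\lfloor k/2^{d-d_0}\rfloor/2^{d_0} \le k/2^d < b+\delta$, so by weak monotonicity combined with this constancy, the first factor is at most $\gamma_{d_0}(b)$. Hence $\limsup f(n,k,d)/\binom{n}{d} \le \gamma_{d_0}(b)$ for every $d_0$, and sending $d_0 \to \infty$ yields the matching $\limsup \le \gamma_\infty(b)$.

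The principal obstacle is calibrating this iteration: after $d-d_0$ halvings with floors, the base argument $\lfloor k/2^{d-d_0}\rfloor/2^{d_0}$ may drift slightly below or above $b$, and the step-function/right-continuous structure of $\gamma_{d_0}$ (inherited through the floor in its definition) is precisely what absorbs both discrepancies into $\gamma_{d_0}(b)$ for the upper bound. Once this works for every fixed $d_0$, the double limit—first in the growing sequence, then in $d_0$—cleanly identifies the target constant $\gamma_\infty(b)$.
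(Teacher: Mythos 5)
Your proof is correct, and its skeleton is the same as the paper's: lower bound straight from \cref{l21}, upper bound by iterating \cref{l24} and finishing with the \cref{l21} upper bound at a reduced dimension. The one genuine difference is the endpoint of the iteration. The paper stops at a \emph{slowly growing} dimension $d-r$ chosen so that $1 \ll (d-r)^2 \ll n-d$, which keeps $(n-r)^{d-r}$ within $1+o(1)$ of the falling factorial but forces it to control $\gamma_{d-r}(k/2^d)$ with $d-r\to\infty$ via the right-continuity of $\gamma_\infty$ (\cref{prop:rcon}). You instead descend all the way to a \emph{fixed} $d_0$ and take a double limit (first in the sequence, then $d_0\to\infty$); because $d_0$ is fixed, the step width $2^{-d_0}$ of $\gamma_{d_0}$ is uniform, so the constancy of $\gamma_{d_0}$ on $[b,b+\delta)$ replaces \cref{prop:rcon} entirely, and the error factor $(n-d+d_0)^{d_0}/\bigl((n-d+1)\cdots(n-d+d_0)\bigr)\to 1$ needs only $n-d\to\infty$. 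Your version is thus slightly more self-contained (no appeal to right-continuity and no delicate choice of $r$), at the cost of a double limit; both handle the floor-drift and validity of each application of \cref{l24} correctly.
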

\begin{proof}
To show the lower bound, note that by \cref{l21}, we have
\[f(n, k, d) > c(k,d) \binom{n}{d} \geq c(\floor{b2^d}, d) \binom{n}{d} = \gamma_d(b) \binom{n}{d} \geq \gamma_\infty(b) \binom{n}{d}.\]
To show the upper bound, we
choose an integer $0 \leq r < d$ so that 
$1 \ll (d-r)^2 \ll n-d$.
Applying \cref{l24} $r$ times and then \cref{l21}, we get
\begin{align*}
f(n,k,d) &\leq \frac{n(n-1) \cdots (n-r+1)}{d(d-1)
	\cdots (d-r+1)} f(n-r,\floor{k/2^r},d-r) \\ &\leq c(\floor{k/2^r}, d-r) \frac{n(n-1) \cdots (n-r+1)(n-r)^{d-r}}{d!}.
\end{align*}
Since $(d-r)^2 \ll n-d \leq n-r$, we find that
\begin{align*}
\binom{n}{d} \bigg/ \frac{n(n-1) \cdots (n-r+1)(n-r)^{d-r}}{d!}
&= \frac{(n-r)(n-r-1) \cdots (n-d+1)}{(n-r)^{d-r}} \\
&= \paren*{1 - \frac{1}{n-r}}\paren*{1 - \frac{2}{n-r}} \cdots \paren*{1 - \frac{d-r-1}{n-r}} \\
&= 1 + o(1),
\end{align*}
so it suffices to show that $c(\floor{k/2^r}, d-r) = \gamma_{d-r}(k/2^d)$ is bounded above by $\gamma_\infty(b) + o(1)$. Indeed, for every $\eps > 0$, since $k/2^d$ is eventually less than $b + \eps$, we eventually have $\gamma_{d-r}(k/2^d) \leq \gamma_{d-r}(b + \eps) = \gamma_\infty(b + \eps) + o(1)$. The result follows from \cref{prop:rcon}.
\end{proof}

\begin{rmk}
The assumption that $n-d$ tends to infinity is necessary for \cref{lem:gammainfty} to hold. Indeed, we trivially have $f(n,2^n,n)=1\;(=\binom{n}{n})$, and it is also easy to see that
$f(n,2^{n-1},n-1)=n \;(=\binom{n}{n-1})$ by taking the
collection of all even-sized subsets of $[n]$. On the other hand, $\gamma_\infty(1) < 1$.\footnote{We will determine the exact value of $\gamma_\infty(1)$ later, but an easy way to see this now is to use the fact that \emph{every} $d$-uniform hypergraph with $d \geq 2$ has a Lagrangian strictly less than $1/d!$.} In fact, for any fixed $s$ it can be shown that
$f(n,2^{n-s},n-s) \geq C_s \binom{n}{n-s}$ for some constant $C_s >\gamma_\infty(1)$.
\end{rmk}

\section{Shattering Pairs} \label{sec:pairs}
To begin, we recall a result of Kleitman and Spencer \cite{KS}. For a positive integer $k \geq 4$, call a collection $\KK\subseteq 2^{[k]}$ \emph{(qualitatively) pairwise independent} if for every two distinct
$A,B \in \KK$, all four intersections $A  \cap B$, $A \cap \bar B$,
$\bar A \cap B$, and $\bar A \cap \bar B$ are nonempty, 
where $\bar A=[k]\setminus A$ and $\bar B=[k]\setminus B$. We then have the following:
\begin{lem}[\cite{KS}]
	\label{l23}
	The maximum possible size of a  pairwise independent
	collection of subsets of $[k]$ is 
	\[\binom{k-1}{\lfloor k/2 \rfloor-1}.\]
\end{lem}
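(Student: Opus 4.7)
I would split the proof into a constructive lower bound and an upper bound via reduction to the Erd\H{o}s--Ko--Rado theorem.

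For the lower bound, I would fix any element $x \in [k]$ and take $\KK$ to be all $\lfloor k/2 \rfloor$-subsets of $[k]$ containing $x$, which has $\binom{k-1}{\lfloor k/2 \rfloor - 1}$ members. For distinct $A, B \in \KK$, they share $x$ (so $A \cap B \neq \emptyset$), are same-sized and hence incomparable (so $A \setminus B$ and $B \setminus A$ are both nonempty), and have $|A \cup B| \leq 2 \lfloor k/2 \rfloor - 1 < k$ since $x$ is shared (so $\overline{A \cup B} \neq \emptyset$); the four intersection conditions all hold.

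For the upper bound, I would first observe that pairwise independence forces $\KK \subseteq 2^{[k]} \setminus \{\emptyset, [k]\}$ to be an antichain with $\KK \cap \{\bar A : A \in \KK\} = \emptyset$. Partitioning $\KK = \KK_- \sqcup \KK_+$ by whether $|A| \leq \lfloor k/2 \rfloor$ or not, I would form
\[
\tilde\KK = \KK_- \cup \{\bar A : A \in \KK_+\},
\]
a family of subsets of $[k]$ of size between $1$ and $\lfloor k/2 \rfloor$. A short case analysis on distinct pairs in $\tilde\KK$ (within $\KK_-$, within $\{\bar A : A \in \KK_+\}$, and across the two) verifies that $\tilde\KK$ is an intersecting antichain, and the disjointness $\KK_- \cap \{\bar A : A \in \KK_+\} = \emptyset$ gives $|\tilde\KK| = |\KK|$.

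Finally, I would lift $\tilde\KK$ to an intersecting family at level $\lfloor k/2 \rfloor$ via a symmetric chain decomposition of $2^{[k]}$: every chain in such a decomposition passes through level $\lfloor k/2 \rfloor$, and since $\tilde\KK$ is an antichain, each chain contains at most one of its members. Sending each $A \in \tilde\KK$ to the unique level-$\lfloor k/2 \rfloor$ element of its chain yields an injection $\phi \colon \tilde\KK \to \binom{[k]}{\lfloor k/2 \rfloor}$ with $A \subseteq \phi(A)$; the image $\phi(\tilde\KK)$ is intersecting (supersets of intersecting sets intersect), so Erd\H{o}s--Ko--Rado gives $|\tilde\KK| = |\phi(\tilde\KK)| \leq \binom{k-1}{\lfloor k/2 \rfloor - 1}$. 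The main conceptual hurdle is the construction of $\tilde\KK$, since one must simultaneously verify the intersecting property, the antichain property, and the size-preservation across all three pair types; the subsequent lifting and EKR application are standard.
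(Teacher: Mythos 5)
Your proof is correct, but note that the paper does not prove this lemma at all --- it is quoted directly from Kleitman and Spencer, so there is no internal argument to compare against. Your write-up is a valid self-contained proof, and it follows what is essentially the classical route. The lower bound (the star of all $\lfloor k/2\rfloor$-sets through a fixed point) checks out: two such sets meet, neither contains the other, and their union misses at least one point since $2\lfloor k/2\rfloor - 1 < k$. For the upper bound, the four nonemptiness conditions translate exactly into: $\KK$ is intersecting, co-intersecting (complements intersect), an antichain, and free of complementary pairs; your folding $\tilde\KK = \KK_- \cup \{\bar A : A \in \KK_+\}$ preserves cardinality (if $A = \bar B$ with $A \in \KK_-$, $B \in \KK_+$ then $A \cap B = \emptyset$, contradiction) and the three pair checks all reduce to one of the four conditions, e.g.\ $\bar B \subseteq A$ iff $\bar A \cap \bar B = \emptyset$. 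The resulting sets all have size at most $\lceil k/2\rceil - 1 \leq \lfloor k/2 \rfloor$ when they come from $\KK_+$ and at most $\lfloor k/2\rfloor$ otherwise, so the symmetric-chain push-up to level $\lfloor k/2\rfloor$ is well defined, injective on an antichain, and inclusion-preserving, and Erd\H{o}s--Ko--Rado (applicable since $\lfloor k/2\rfloor \leq k/2$) gives the bound $\binom{k-1}{\lfloor k/2\rfloor - 1}$. The only cosmetic caveat is that the degenerate case $\abs{\KK}\leq 1$ should be dispatched separately, since the structural deductions (no $\emptyset$ or $[k]$, antichain, etc.) use the existence of a second set; the bound is trivial there.
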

Given this result, the exact value of $f(n,k,2)$ follows quite quickly:

\begin{prop}
	\label{p31}
	For $k \geq 4$,
	\[f(n,k,2) = t\left(n, \binom{k-1}{\lfloor k/2 \rfloor - 1}\right),\]
	where \[
	t(n,r)=\sum_{0 \leq i<j \leq r-1} \left\lfloor \frac{n+i}{r} \right\rfloor 
	\left\lfloor \frac{n+j}{r} \right\rfloor
	\]
	is the number of edges of the Tur\'an graph $T(n,r)$, defined to be the
	complete $r$-partite graph with $n$ vertices and $r$ vertex classes with cardinalities that
	are as close as possible. In particular,
	\[c(k,2) = 1 - \frac{1}{\binom{k-1}{\lfloor k/2 \rfloor - 1}}.\]
\end{prop}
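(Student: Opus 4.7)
The plan is to reformulate $f(n,k,2)$ as a Tur\'an-type extremal problem on a blow-up of the graph $H(k,2)$. First, I would observe that a pair of columns $v, w \in \{0,1\}^k$ of a binary matrix $M$ induces a shattered $k \times 2$ submatrix precisely when the corresponding subsets $A, B \subseteq [k]$ (the supports of $v$ and $w$) satisfy that $A \cap B$, $A \cap \bar{B}$, $\bar{A} \cap B$, $\bar{A} \cap \bar{B}$ are all nonempty, i.e., $\{A,B\}$ is qualitatively pairwise independent in the sense of \cref{l23}. Consequently, $H(k,2)$ is exactly the graph whose edges are the pairwise-independent pairs of binary vectors, and by \cref{l23} its clique number is $r \coloneq \binom{k-1}{\lfloor k/2 \rfloor - 1}$.

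Next, given any $k \times n$ binary matrix $M$, define an auxiliary graph $G$ on the column index set $[n]$ by joining $i$ and $j$ whenever columns $i$ and $j$ of $M$ form a shattered pair; then the number of shattered $k \times 2$ submatrices of $M$ equals $|E(G)|$. Observe that $G$ is precisely the blow-up of $H(k,2)$ in which vertex $v$ is replaced by $m_v$ copies, where $m_v$ is the number of columns of $M$ equal to $v$, with edges placed according to the edge set of $H(k,2)$. Since two equal columns obviously never form a shattered pair, any clique in $G$ consists of columns pairwise-distinct as vectors and so lifts to a clique in $H(k,2)$; thus $\omega(G) \leq r$. Tur\'an's theorem immediately gives $|E(G)| \leq t(n,r)$.

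For the matching lower bound, pick any clique $\{v_1, \ldots, v_r\}$ in $H(k,2)$ (provided by \cref{l23}) and construct $M$ whose columns are the $v_i$ used with multiplicities as balanced as possible, i.e., each appearing $\lfloor n/r \rfloor$ or $\lceil n/r \rceil$ times. The resulting graph $G$ is exactly the Tur\'an graph $T(n,r)$, with $t(n,r)$ edges, establishing $f(n,k,2)=t\!\left(n,\binom{k-1}{\lfloor k/2\rfloor-1}\right)$. The formula for $c(k,2)$ then follows from \cref{cor:ckd} together with the standard estimate $t(n,r)=(1-1/r)\binom{n}{2}+O(n)$, yielding $c(k,2)=\lim_{n\to\infty} f(n,k,2)/\binom{n}{2}=1-1/r$.

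There is no real obstacle here beyond the initial translation: once one recognizes that the $d=2$ shattering condition for columns is identical to qualitative pairwise independence of the associated subsets of $[k]$, the classical Tur\'an theorem and \cref{l23} do all of the combinatorial work, and the equality case is witnessed by the obvious Tur\'an-type construction.
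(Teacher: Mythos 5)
Your proof is correct and follows essentially the same route as the paper: translate shattered pairs of columns into qualitatively pairwise independent pairs of subsets, bound the clique number of the auxiliary graph on columns via the Kleitman--Spencer result (\cref{l23}), apply Tur\'an's theorem for the upper bound, and realize equality with a balanced multiplicity construction on a maximum pairwise independent family. The blow-up framing and the explicit derivation of $c(k,2)$ from \cref{cor:ckd} are just minor presentational additions to the paper's argument.
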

\begin{proof}
	We use the binary matrix interpretation developed in \cref{sec:general}. Given a $k\times n$ binary matrix $M$, construct a graph $G$ on the columns of $M$ by placing an edge between any two columns such that the corresponding $k \times 2$ submatrix is shattered. Thus, the number of shattered pairs is precisely the number of edges of $G$.
	
	By associating subsets of $[k]$ with their indicator vectors, \cref{l23} implies that the clique number of $G$ is at most $w \coloneq \binom{k-1}{\lfloor k/2 \rfloor - 1}$. Therefore, $G$ above is $K_{w+1}$-free, and so has at most 
	$t(n,w)$ edges by Tur\'an's Theorem. On the other hand, we can make equality hold by taking some pairwise independent $\KK \in 2^{[k]}$ with $\abs{\KK} = w$, considering the indicator vectors of the elements of $\KK$, and constructing a $k \times n$ 
	matrix in which each of these
	$w$ vectors appears either $\lfloor n/w \rfloor$ or $\lceil n/w
	\rceil$ times.
\end{proof}

\section{The Case \texorpdfstring{$k = 2^d$}{k = 2\textasciicircum d}} \label{sec:k2d}
We now state the key lemma for understanding the case $k = 2^d$, which was first proven in \cite{DM18}:
\begin{lem}[\cite{DM18}] \label{lem:2dlemma}
For $d$ a positive integer, we have 
\[c(2^d, d) = \frac{(2^d-2)(2^d-4) \cdots (2^d-2^{d-1})}{(2^d-1)^{d-1}} \eqcolon c_d.\]
Moreover, the Lagrangian polynomial of $H(2^d, d)$ attains its maximal value at a point with all coordinates in $\frac{1}{2^d-1} \setz$.
\end{lem}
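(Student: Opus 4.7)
The plan is to prove the two assertions $\lambda(H(2^d,d)) = c_d/d!$ (via matching lower and upper bounds) and that the maximum is attained at a point with coordinates in $\frac{1}{2^d-1}\setz$.

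For the lower bound and the structural assertion, I would identify $[2^d]$ with $\setf_2^d$ and, for each nonzero $w \in \setf_2^d$, define the character vector $v_w \in \set{0,1}^{2^d}$ by $v_w(j) = w \cdot j \pmod{2}$. Placing weight $1/(2^d-1)$ on each $v_w$ (and zero elsewhere) gives a probability distribution whose coordinates all lie in $\frac{1}{2^d-1}\setz$. A $d$-set $\set{v_{w_1}, \ldots, v_{w_d}}$ forms an edge of $H(2^d,d)$ iff the map $j \mapsto (w_1 \cdot j, \ldots, w_d \cdot j)$ is a bijection $\setf_2^d \to \setf_2^d$, equivalently iff $\set{w_1, \ldots, w_d}$ is a basis of $\setf_2^d$. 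The number of such bases is $\prod_{i=0}^{d-1}(2^d-2^i)/d!$, so $P_{H(2^d,d)}$ evaluated at this distribution is
\[\frac{\prod_{i=0}^{d-1}(2^d-2^i)}{d!(2^d-1)^d} = \frac{(2^d-2)(2^d-4)\cdots(2^d-2^{d-1})}{d!(2^d-1)^{d-1}} = \frac{c_d}{d!},\]
which gives $\lambda(H(2^d,d)) \geq c_d/d!$ with the required structure.

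For the upper bound, the first reduction is that only ``balanced'' vectors (those of weight $2^{d-1}$) can appear in edges of $H(2^d,d)$, so any maximizer $x$ is supported on such vectors. Beyond this, I would attempt an inductive argument based on the link structure. For any balanced $v$ in the support, $v$ partitions $[2^d]$ into $A = v^{-1}(0)$ and $\bar A = v^{-1}(1)$, each of size $2^{d-1}$, and the link hypergraph $H_v$ consists of those $(d-1)$-sets $\set{u_1,\ldots,u_{d-1}}$ whose restrictions to $A$ and to $\bar A$ both form shattered $2^{d-1} \times (d-1)$ matrices. Writing each neighbor $u$ of $v$ as a pair $(u|_A, u|_{\bar A}) \in \set{0,1}^A \times \set{0,1}^{\bar A}$, the Lagrangian polynomial of $H_v$ decomposes as a sum over pairs of shattered $(d-1)$-sets $A_E \subseteq \set{0,1}^A$ and $B_E \subseteq \set{0,1}^{\bar A}$, weighted by the permanent $\mathrm{perm}(x|_{A_E \times B_E})$ encoding the pairing between sides. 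The KKT condition $\sum_{e \ni v} \prod_{u \in e \setminus \set{v}} x_u = d\lambda(H(2^d,d))$ at the maximum, combined with a bound on $\lambda(H_v)$ coming from the induction hypothesis for $H(2^{d-1},d-1)$, would then be leveraged to extract the target inequality.

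The main obstacle I anticipate is producing exactly the constant $c_d/d!$ rather than merely the correct order. A naive decoupling across the two sides via the marginals of $x$ would yield a bound involving $\lambda(H(2^{d-1},d-1))^2$, but $c_d/c_{d-1}^2$ does not simplify cleanly (already with $c_2 = 2/3$ and $c_3 = 24/49$), so the product of marginal Lagrangians cannot directly recover $c_d$. A sharper permanent inequality respecting the joint structure of $x$, or perhaps an entirely different algebraic approach directly comparing general bijections $\setf_2^d \to \setf_2^d$ with invertible linear maps in $\mathrm{GL}_d(\setf_2)$, is likely required. This last step is where I expect the specific $\setf_2$-linear structure of the problem to become essential.
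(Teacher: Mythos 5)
Your lower-bound construction and the structural claim about the maximizer are correct and are exactly the paper's argument: the uniform weight $\frac{1}{2^d-1}$ on the $2^d-1$ character vectors $v_w$ turns edges into bases of $\setf_2^d$, giving $P_{H(2^d,d)} = c_d/d!$ at a point with coordinates in $\frac{1}{2^d-1}\setz$ (this is \cref{lem:linalg} with $V=S=\setf_2^d$). The upper bound, however, is a genuine gap, and the obstacle you flag at the end is real: the route through pairs of shattered $(d-1)$-sets on $A$ and $\bar A$ weighted by permanents is not how the constant is extracted, and as you observe it cannot be closed by decoupling into marginals.

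Two ideas are missing. First, the paper simply \emph{discards} the shattering condition on $\bar A=v^{-1}(1)$: every edge of the link of $v$ restricts on $A=v^{-1}(0)$ to a shattered $2^{d-1}\times(d-1)$ matrix, so after identifying neighbors of $v$ that agree on $A$ (and adding their weights) the link embeds into $H(2^{d-1},d-1)$ with total weight at most $1-x_v$. Hence the contribution of all edges through $v$ is at most $x_v\,\lambda(H(2^{d-1},d-1))(1-x_v)^{d-1}$, and induction replaces $\lambda(H(2^{d-1},d-1))$ by $c_{d-1}/(d-1)!$ --- no permanent or joint-structure inequality is needed. Second, to recover the exact constant one must first pass to a maximizer of \emph{minimal support}, so that every two support vectors lie in a common edge; two columns of a shattered matrix split the rows into four equal quarters, so the $\pm1$ versions of the support vectors are pairwise orthogonal and orthogonal to the all-ones vector, forcing the support to have size at most $2^d-1$. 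Summing the link bound over the support (each edge counted $d$ times) reduces everything to bounding $\sum_v x_v(1-x_v)^{d-1}$ for a probability vector on at most $2^d-1$ atoms, which by \cref{l25} is at most $\bigl(\frac{2^d-2}{2^d-1}\bigr)^{d-1}$ --- precisely the ratio $c_d/c_{d-1}$. Without the support-size bound this sum can approach $1$ and no nontrivial constant emerges, and without projecting to a single half you remain stuck at the decoupling problem you describe; so both ingredients are essential and both are absent from your plan.
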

Given this result, \cref{t12} follows by applying \cref{l21}. It also implies that $\gamma_\infty(1) = \lim_{d \to \infty} c_d = c$, so \cref{t13a} follows from \cref{lem:gammainfty}. \cref{t13} is a special case of \cref{t13a}.

For completeness, in this section we give two related proofs of the upper bound $c(2^d, d) \leq c_d$, the first of which works directly with the Lagrangian and is essentially the same as \cite{DM18}, and the second of which uses a result of Erd\H{o}s concerning degree majorization. The lower bound and its associated equality case will also be shown in \cref{sec:kmedium}, where it will follow easily from some more general techniques.

In both proofs of the upper bound, we will need the following useful lemma, which appears in both \cite{DM18} and \cite{Al}.
\begin{lem}
\label{l25}
Let $d \geq 2$ and let $(p_i: 1 \leq i \leq 2^d-1)$ be
an arbitrary
probability distribution on a set of size $2^d-1$. Then
\[
\sum_i p_i(1-p_i)^{d-1}
\leq \left(\frac{2^d-2}{2^d-1}\right)^{d-1}.
\]
\end{lem}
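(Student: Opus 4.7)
The plan is to maximize $F(p) = \sum_{i=1}^N p_i(1-p_i)^{d-1}$ directly over the probability simplex $\Delta_N = \set{p \in \setr^N : p_i \geq 0,\ \sum p_i = 1}$ with $N = 2^d - 1$, and to show the unique maximizer is the uniform distribution $p_i = 1/N$, which yields the target value $((2^d-2)/(2^d-1))^{d-1}$. Compactness of $\Delta_N$ guarantees that the maximum is attained.

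The key ingredient is a description of $g'$ on $[0,1]$, where $g(p) = p(1-p)^{d-1}$. Direct computation gives $g'(p) = (1-p)^{d-2}(1-dp)$ with $g'(0) = 1$ and $g'(1/d) = 0$; the second derivative $g''(p) = -(d-1)(1-p)^{d-3}(2-dp)$ shows that $g'$ is strictly decreasing on $[0, 2/d]$. Consequently $g'(p) > 0$ on $[0, 1/d)$ and $g'(p) \leq 0$ on $[1/d, 1]$. In particular, for $\lambda > 0$ the equation $g'(p) = \lambda$ has a unique solution in $[0, 1)$, which lies in $(0, 1/d)$; while for $\lambda \leq 0$, every solution in $[0, 1)$ satisfies $p \geq 1/d$.

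Next, I would rule out boundary maxima: if a putative maximizer has some $p_i = 0$ and some $p_j > 0$, then shifting mass $\eps$ from $p_j$ to $p_i$ changes $F$ by $\eps(1 - g'(p_j)) + O(\eps^2)$, which is strictly positive for small $\eps > 0$ since $g'(p_j) < 1$ whenever $p_j > 0$. Hence the maximum lies in the interior of $\Delta_N$, where the KKT condition gives a common value $g'(p_i) = \lambda$ for all $i$. If $\lambda > 0$, all $p_i$ coincide with the unique root, forcing $p_i = 1/N$. If $\lambda \leq 0$, then every $p_i \geq 1/d$, so $\sum p_i \geq N/d = (2^d - 1)/d > 1$ for $d \geq 2$, contradicting $\sum p_i = 1$. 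Thus $p_i = 1/N$ for every $i$, and substituting gives $F = ((2^d-2)/(2^d-1))^{d-1}$ as claimed.

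The main obstacle is really the combination of the shape analysis of $g'$ and the arithmetic inequality $2^d - 1 > d$ for $d \geq 2$; the latter is precisely what rules out the $\lambda \leq 0$ case and collapses every other critical-point scenario (including any putative two-value case arising when $d \geq 3$) down to the uniform distribution. Everything else—the KKT setup and the first-order boundary perturbation—is routine.
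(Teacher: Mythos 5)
Your argument is correct. One point of comparison is moot here: the paper does not actually prove \cref{l25}; it only cites \cite{DM18} and \cite{Al} for it, so you are supplying a proof the paper omits. Your route is the standard Lagrangian/smoothing one: compactness gives a maximizer, the first-order perturbation $g(\eps)-g(0)+g(p_j-\eps)-g(p_j)=\eps(1-g'(p_j))+O(\eps^2)$ with $g'(p_j)<g'(0)=1$ rules out zero coordinates, and then the shape of $g'(p)=(1-p)^{d-2}(1-dp)$ (strictly decreasing and positive on $[0,1/d)$, nonpositive on $[1/d,1]$) collapses the stationarity condition $g'(p_i)=\lambda$ to the uniform distribution: for $\lambda>0$ all $p_i$ lie in the injectivity range $(0,1/d)$ and hence coincide, while $\lambda\le 0$ would force every $p_i\ge 1/d$ and contradict $\sum p_i=1$ via $2^d-1>d$. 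All the computations check out ($g'$, $g''$, the handling of $d=2$ where $(1-p)^{d-2}=1$, and the evaluation at $p_i=1/(2^d-1)$), and you correctly note that the possible second root of $g'=\lambda$ beyond $2/d$ for $\lambda<0$ is harmless because it still exceeds $1/d$. This is a clean, self-contained proof of exactly the statement the paper needs.
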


\subsection{Proof of upper bound using Lagrangians} \label{sec:proof1}
We apply induction on $d$.
For $d=1$, $H(2,1)$ is a $1$-uniform hypergraph  with two
edges (singletons) corresponding to the vectors $01$ and $10$.
It is clear that $\lambda(H(2,1))=1=c_1/1!$, as needed.

Assuming the result holds for $d-1$, we prove it for $d$, where $d \geq 2$.
Let $D=2^d$ and 
let $P$ be the Lagrangian polynomial of
$H(2^d,d)=H(D,d)$. Suppose it attains its maximum at the point
$\bm{x}=(x_v)_{v \in \set{0,1}^D}$, where the vector $\bm{x}$ has
a support $S$ of minimum possible size among all vectors maximizing
$P$. By a well-known property of Lagrangians, every pair $u,v$
of distinct vertices in $S$ is contained in an edge of
$H(D,d)$. Indeed, if not, then when fixing the values of all $x_w$ where $w \neq u,v$, the function $P(\bm{x})$ is a linear function
of $x_u$ and $x_v$, so its maximum subject to the constraints $x_u,x_v \geq 0$ and $x_u+x_v=a$ for some $a$ is attained 
at a point where either $x_u=0$ or $x_v=0$, which must have a smaller support.  We may thus assume that every pair of
vectors $u,v$ with $x_u,x_v>0$ is contained in an edge of
$H(D,d)$.

One easy consequence of this is that every $v \in S$ contains exactly $D/2$ zeros and $D/2$ ones. Moreover, since the vectors
\[\set{((-1)^{v_1}, (-1)^{v_2},
\ldots ,(-1)^{v_D}) : (v_1,v_2,\ldots,v_D) \in S}\] are mutually orthogonal and are additionally orthogonal to the all-$1$s vector, we find that $\abs{S} \leq D-1$.

Fix some $v \in S$ and let $I$ denote the set of indices of its $2^{d-1}$ 
$0$-coordinates. Then the link of $v$ in $H(D,d)$ is a
$(d-1)$-uniform hypergraph in which every edge is a set of
$d-1$ vectors whose $I$-coordinates 
form a $2^{d-1} \times (d-1)$ shattered matrix; in other words, after identifying 
vertices that have the same $I$-coordinates, this hypergraph becomes $H(2^{d-1}, d-1)$. By adding weights of
identified vertices, it follows that the contribution
of all edges containing $v$ to the sum in the expression of
$P(\bm{x})$ is at most 
$x_v \cdot \lambda(H(2^{d-1},d-1))(1-x_v)^{d-1}$. By the induction
hypothesis, $\lambda(H(2^{d-1},d-1)) \leq \frac{c_{d-1}}{(d-1)!}$.
Summing over all $v$ we get every term $d$ times and hence
\[
P(\bm{x}) \leq \frac{1}{d}  \cdot
\frac{c_{d-1}}{(d-1)!} \sum_v x_v (1-x_v)^{d-1} \leq \frac{c_{d-1}}{d!} 
\left(\frac{2^d-2}{2^d-1}\right)^{d-1},
\]
where we have used \cref{l25}. This last quantity can be easily checked to be $c_d/d!$. (See Equation (2) in \cite{Al} for a combinatorial explanation of this fact.)

\subsection{Proof of upper bound using degree majorization}
For this proof, we will use a result of Erd\H{o}s, which was originally
used to provide a proof of Tur\'an's Theorem. Suppose $G$ and $H$
be two graphs on $n$ vertices and let $d_1 \geq d_2 \geq \ldots \geq
d_n$ be the degrees of the vertices of $G$ and
$f_1 \geq f_2 \ldots \geq f_n$ be the degrees of the vertices of
$H$. Say $G$ is \emph{degree-majorized} by $H$ if
$d_i \leq f_i$ for all $i$.
\begin{lem}[\cite{Er}]
\label{l22}
If $G$ is a graph on $n$ vertices that contains no clique of
size $w+1$ then it is degree-majorized by some complete $w$-partite 
graph on $n$ vertices.
\end{lem}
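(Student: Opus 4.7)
The plan is to prove this classical result by induction on $w$. The base case $w=1$ is immediate: a $K_2$-free graph has no edges, and the only complete $1$-partite graph on $n$ vertices is $\overline{K_n}$, so both have the all-zero degree sequence.

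For the inductive step, I would pick a vertex $v\in V(G)$ of maximum degree, so $\deg_G(v)=d_1$, and partition $V(G)=A\cup B$ where $A=N_G(v)$ has size $d_1$ and $B=V(G)\setminus A$ has size $n-d_1$ (noting $v\in B$). The induced subgraph $G[A]$ is $K_w$-free, since any $w$-clique inside $A$ would together with $v$ form a $(w+1)$-clique in $G$. By the induction hypothesis, $G[A]$ is degree-majorized by a complete $(w-1)$-partite graph $H_A$ on the vertex set $A$. I would then build $H$ by placing $H_A$ on $A$, placing no edges inside $B$, and adding every edge between $A$ and $B$; the resulting $H$ is complete $w$-partite, with parts those of $H_A$ together with $B$.

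To conclude that $H$ degree-majorizes $G$, I would strengthen the inductive statement to the following vertex-matched form: there exists a bijection $\phi\colon V(G)\to V(H)$ with $\deg_H(\phi(u))\geq\deg_G(u)$ for every $u$, from which the sorted inequality $f_i\geq d_i$ follows by a standard pigeonhole argument (at least $i$ vertices of $H$ have degree at least $d_i$). For $u\in B$ we have $\deg_G(u)\leq d_1=\deg_H(u)$ by the maximality of $v$, so $\phi$ may map $B$ to itself arbitrarily. For $u\in A$, the stronger inductive hypothesis supplies a bijection within $A$ under which $\deg_{H_A}$ dominates $\deg_{G[A]}$ vertex by vertex; since each $u\in A$ satisfies $\deg_G(u)=\deg_{G[A]}(u)+|N_G(u)\cap B|\leq \deg_{G[A]}(u)+(n-d_1)$ while $\deg_H(\phi(u))=\deg_{H_A}(\phi(u))+(n-d_1)$, the desired inequality is preserved.

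The main subtlety is recognizing the need for this vertex-matched strengthening: the bare sorted-sequence majorization delivered by the induction hypothesis is awkward to compose with the $n-d_1$ new edges from each $A$-vertex to $B$, since one must track which vertex of $A$ absorbs which contribution from $B$ in the $G$-degree. Once the vertex-matched formulation is adopted, the rest of the argument is pure bookkeeping, and the construction of $H$ as the join of $H_A$ with the independent set $B$ is essentially forced.
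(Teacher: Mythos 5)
Your proof is correct. The paper does not prove this lemma at all --- it is quoted as a classical result of Erd\H{o}s and used as a black box --- so there is nothing internal to compare against; your argument (induct on $w$, pass to the neighborhood of a maximum-degree vertex, and join the resulting complete $(w-1)$-partite graph with the remaining vertices as an independent part) is the standard proof, and your observation that one should carry the vertex-matched form of majorization through the induction is exactly the right way to make the bookkeeping clean.
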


We now prove that $f(n, 2^d, d) \leq c_d n^d/d!$ by induction on $n$. As the case $d=1$ is trivial, assume that the result holds for $d-1$ with $d \geq 2$; we will prove it for $d$.

Let $M$ be a $2^d \times n$ binary matrix with the
maximum possible number $f(n,2^d,d)$ of shattered $2^d \times d$
submatrices. We may assume that
every column of $M$ contains exactly $2^{d-1}$ zeros and
exactly $2^{d-1}$ ones. Construct the graph $G$ on the 
columns of $M$, where two columns are joined by an edge if there are
exactly $2^{d-2}$ coordinates in which both are $0$ (which forces
exactly $2^{d-2}$ coordinates in which one column is $a$ and the other is $a'$ for all $a,a' \in \set{0,1}$). Note that for
any fixed column $v$, the other columns in 
any shattered $2^d \times d$ submatrix of $M$
that contains $v$ must be connected to $v$ in the graph $G$.
Moreover, after deleting $v$ and restricting to the
$2^{d-1}$ rows in which $v$ has a zero, we get a shattered 
$2^{d-1} \times (d-1)$ matrix. By the induction hypothesis this
implies that if the degree of $v$ in $G$ is $d_v$, then the number
of shattered $2^d \times d$ submatrices containing it is at most $c_{d-1} d_v^{d-1}/(d-1)!$.

By the same orthogonality argument as in \cref{sec:proof1}, the
largest clique of $G$ is of size at most $w=2^d-1$. Therefore,
by \cref{l22}, there is a complete $w$-partite graph $H$ on
$n$ vertices so that $G$ is degree majorized by $H$. If the sizes
of the vertex classes of this graph are $n_1,n_2, \ldots ,n_w$, it follows that the vertices of $G$ can be partitioned into subsets of
sizes $n_1, n_2, \ldots ,n_w$,
where each of the $n_i$ vertices in the $i$th subset
has degree at most
$n-n_i$. Summing over all columns $v$ we conclude
\begin{multline*}
f(n,2^d,d) \leq \frac{1}{d} \sum_i n_i \frac{c_{d-1}}{(d-1)!}
(n-n_i)^{d-1} \\ = \frac{c_{d-1}n^d}{d!} \sum_i 
\frac{n_i}{n} \paren*{1-\frac{n_i}{n}}^{d-1} \leq \frac{c_{d-1} n^d}{d!} \left(\frac{2^d-2}{2^d-1}\right)^{d-1} 
n^d=\frac{c_dn^d}{d!},
\end{multline*}
where the second inequality uses \cref{l25} on $(n_i/n)_{i \in [w]}$. This concludes the proof.

\section{General \texorpdfstring{$k$}{k}} \label{sec:kg2d}
As mentioned in the introduction, the behavior of 
$f(n,k,d)$ and $c(k,d)$ when $k>2^d$ is less understood than the case $k = 2^d$. Although the discussion in \cref{sec:pairs} settles the case $d=2$, we do not have any upper bounds better than using \cref{l24} to reduce to either the case $d=2$ or the case $k = 2^d$. In this section, we detail lower bounds on $c(k,d)$ in two regimes: when $2^d \leq k \leq 2^{d+1}$ and when $k \gg 2^d$. The former case also contains the proof of the lower bound of \cref{l25}. 
\subsection{\texorpdfstring{\boldmath $k \leq 2^{d+1}$}{k ≤ 2\textasciicircum (d+1)}} \label{sec:kmedium}
All of our constructions will use the following lemma:
\begin{lem} \label{lem:linalg}
Let $d \geq 1$ and $k \geq 2^d$ be integers. Let $V$ be a $d'$-dimensional $\setf_2$-vector space and let $S$ be a subset of $V$ of size $k$. If $p$ is the probability that a uniformly random linear map $V \to \setf_2^d$ is surjective when restricted to $S$, then there exists a $k \times (2^{d'}-1)$ matrix with $2^{dd'}p/d!$ shattered $k \times d$ submatrices. In particular,
\[c(k, d) \geq \paren*{\frac{2^{d'}}{2^{d'}-1}}^d p.\]
\end{lem}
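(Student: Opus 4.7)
My plan is to build the matrix $M$ explicitly from the dual space of $V$, count its shattered $k\times d$ submatrices through a direct linear-algebraic bijection, and then invoke the upper bound in \cref{l21} to translate the count into the stated bound on the Lagrangian.

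For the construction, I would index the rows of $M$ by the elements of $S$ and its $2^{d'}-1$ columns by the nonzero linear functionals $\phi \in V^*$, placing $\phi(s) \in \setf_2$ at the entry in row $s$ and column $\phi$. A choice of $d$ columns $\{\phi_1, \ldots, \phi_d\}$ then assembles, row by row, into the image of $S$ under the linear map $T \colon V \to \setf_2^d$ whose coordinate functionals are $\phi_1, \ldots, \phi_d$. The corresponding $k \times d$ submatrix is therefore shattered exactly when $T(S) = \setf_2^d$, which is precisely the surjectivity event whose probability is $p$.

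For the counting step, I would parameterize shattered submatrices by linear maps rather than by unordered subsets. Ordered tuples $(\phi_1, \ldots, \phi_d) \in (V^*)^d$ correspond to the $2^{dd'}$ linear maps $V \to \setf_2^d$, of which exactly $2^{dd'}p$ restrict to surjections on $S$. Any such surjective tuple must consist of pairwise distinct nonzero functionals, since otherwise $T$ would factor through a proper subspace of $\setf_2^d$. Hence each shattered submatrix of $M$ corresponds to exactly $d!$ ordered tuples, yielding $2^{dd'}p/d!$ shattered $k \times d$ submatrices in total.

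The ``In particular'' conclusion then follows by applying the upper bound $f(n,k,d) \leq \lambda(H(k,d)) n^d$ from \cref{l21} with $n = 2^{d'}-1$:
\[\lambda(H(k,d)) \geq \frac{2^{dd'}p/d!}{(2^{d'}-1)^d},\]
and multiplying by $d!$ yields $c(k,d) \geq (2^{d'}/(2^{d'}-1))^d p$. I do not anticipate any substantive obstacle; the only delicate step is the observation that surjectivity on $S$ forces the coordinate functionals to be distinct and nonzero, which makes the $d!$-to-$1$ overcount from ordered tuples down to edges of $H(k,d)$ exact.
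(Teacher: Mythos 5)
Your proposal is correct and matches the paper's own proof essentially verbatim: the same dual-space construction of the matrix, the same identification of ordered $d$-tuples of columns with linear maps $V \to \setf_2^d$ (with the observation that surjectivity forces the functionals to be distinct and nonzero), and the same $d!$-to-$1$ count followed by an application of \cref{l21}. No gaps.
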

\begin{proof}
The matrix we will construct has its rows indexed by the elements of $S$ and its columns indexed by the nonzero elements of the dual space $V^*$ of $V$. Given $v \in S$ and nonzero $u \in V^*$, the corresponding entry is $\gen{u, v}$.

Every linear map $\varphi\colon V \to \setf_2^d$ can be uniquely expressed as a tuple $(u_1, u_2,\ldots,u_d)$ of dual vectors. If one of the $u_i$ is zero or if $u_i = u_j$ for some $i \neq j$, then $\varphi$ cannot be surjective. Otherwise, $\varphi(S) = \setf_2^d$ if and only if the submatrix of $M$ with columns given by $u_1,u_2,\ldots,u_d$ is shattered. As a result, the $2^{dd'}p$ linear maps $V \to \setf_2^d$ that are surjective on $S$ each correspond to an ordered $d$-tuple of distinct columns of $M$ that determine a shattered submatrix. As each shattered submatrix corresponds to $d!$ such $d$-tuples, there are $2^{dd'}p/d!$ shattered submatrices, as desired.
\end{proof}

\begin{proof}[Proof of lower bound and equality case of \cref{lem:2dlemma}]
We apply \cref{lem:linalg} with $V = S = \setf_2^d$. In this case, $p$ is the probability that a random $d \times d$ $\setf_2$-matrix is invertible, which is exactly
\[\frac{(2^d-2^0)(2^d-2^1) \cdots (2^d-2^{d-1})}{2^{d^2}} = \frac{(2^d-1)^d c_d}{2^{d^2}}.\]
Thus we get $c(2^d, d) \geq c_d$. The fact that a $2^d \times (2^d-1)$ matrix exists with $c_d(2^d-1)^d/d!$ shattered submatrices implies, by \cref{l21}, that the corresponding Lagrangian polynomial attains a value of $c_d/d!$ at a point on $\frac{1}{2^d-1} \setz$.
\end{proof}

\begin{lem} \label{lem:codim}
For integers $d \geq 1$ and $0 \leq r \leq d$, we have $c((2-2^{-r})2^d, d) \geq (2-2^{-r})c_{d+1}$.
\end{lem}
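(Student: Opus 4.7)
The plan is to apply \cref{lem:linalg} with $V = \setf_2^{d+1}$, $d' = d+1$, and $S = V \setminus T$, where $T$ is an arbitrary $(d-r)$-dimensional subspace of $V$; this choice immediately gives $\abs{S} = 2^{d+1}-2^{d-r} = k$, matching the target size. The key reason to take $T$ to be a \emph{subspace} (rather than an arbitrary set of size $2^{d-r}$) is that it linearizes the obstruction to surjectivity, as I describe next.

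Next, I would compute $p$ directly. A linear map $\varphi \colon V \to \setf_2^d$ that is surjective on $S$ must in particular be surjective on $V$, so $\ker\varphi = \gen{v_0}$ for some nonzero $v_0$, and the $2^d$ fibers of $\varphi$ are the pairs $\set{v, v+v_0}$. Surjectivity on $S$ is equivalent to no such pair lying inside $T$; since $T$ is a subspace, this in turn is equivalent to $v_0 \notin T$. Hence the admissible kernels are exactly the $k$ lines $\gen{v_0}$ with $v_0 \in V \setminus T$, and each such line is the kernel of exactly $(2^d-1)(2^d-2)\cdots(2^d-2^{d-1})$ surjective maps $V \to \setf_2^d$, giving
\[p = \frac{k \cdot \prod_{i=0}^{d-1}(2^d-2^i)}{2^{d(d+1)}}.\]

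Substituting into \cref{lem:linalg} yields
\[c(k,d) \geq \left(\frac{2^{d+1}}{2^{d+1}-1}\right)^d p = \frac{k \cdot \prod_{i=0}^{d-1}(2^d - 2^i)}{(2^{d+1}-1)^d}.\]
A routine manipulation pulling one factor of $2$ out of each term yields the identity $2^d \cdot \prod_{i=0}^{d-1}(2^d-2^i) = \prod_{i=1}^{d}(2^{d+1}-2^i)$, after which the right-hand side collapses to $(k/2^d) c_{d+1} = (2-2^{-r}) c_{d+1}$. I expect the only creative step to be the choice of $S = V \setminus T$ with $T$ a subspace; once this ansatz is fixed, the remainder is a short counting argument followed by a routine algebraic identity, which in fact shows equality rather than merely the stated inequality.
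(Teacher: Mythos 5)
Your proof is correct, and the construction is exactly the paper's: apply \cref{lem:linalg} with $V$ of dimension $d+1$ and $S$ the complement of a $(d-r)$-dimensional subspace. The only divergence is in how $p$ is computed. The paper conditions in two stages: it first asks whether $\varphi$ is injective on $W$ (your $T$), and then analyzes the induced map $\bar\varphi\colon V/W \to \setf_2^d/\varphi(W)$, multiplying the two resulting probabilities. You instead classify the good maps by their kernels: any $\varphi$ surjective on $S$ is surjective on $V$, hence has a one-dimensional kernel $\gen{v_0}$, and the subspace structure of $T$ reduces ``no fiber $\set{v,v+v_0}$ lies in $T$'' to the single linear condition $v_0 \notin T$, giving $p = k\prod_{i=0}^{d-1}(2^d-2^i)/2^{d(d+1)}$ in one stroke. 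This is a genuinely cleaner count --- it avoids the quotient-space bookkeeping and the telescoping product manipulations of the paper --- and it agrees with the paper's value of $p$. One small caution on your closing remark: the ``equality'' you observe is only that this particular construction realizes the value $(2-2^{-r})c_{d+1}$ exactly in the bound of \cref{lem:linalg}; it does not establish $c((2-2^{-r})2^d,d) = (2-2^{-r})c_{d+1}$, which remains conjectural (and is false for $d=2$).
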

\begin{proof}
We apply \cref{lem:linalg} with $V$ a $(d+1)$-dimensional space and $S = V \setminus W$, where $W$ is a $(d-r)$-dimensional subspace of $V$. Consider a linear map $\varphi\colon V \to \setf_2^d$ and the induced map $\bar\varphi \colon V/W \to \setf_2^d/\varphi(W)$. Since $S$ is the union of translates of $W$, the map $\varphi$ is surjective on $S$ if and only if $\bar\varphi$ is surjective on the nonzero elements of $V/W$. If $\varphi$ is not injective on $W$ this is impossible, since $\abs{V/W} \leq \abs{\setf_2^d/\varphi(W)}$. If $\varphi$ is injective on $W$, then this occurs if and only if $\bar\varphi$ is surjective, since the fact that $\dim(V/W) = \dim(\setf_2^d/\varphi(W)) + 1$ implies that every element in the codomain has a preimage of size exactly $2$.

If $\varphi$ is chosen uniformly at random, the probability that $\varphi \rvert_W$ is injective is
\[\frac{(2^d-2^0)(2^d-2^1) \cdots (2^d-2^{d-r-1})}{2^{d(d-r)}}.\]
If we fix $\varphi \rvert_W$, then $\bar\varphi$ is uniformly distributed, so the probability that it is surjective is the probability that a random $r \times (r+1)$ $\setf_2$-matrix has rank $r$, which is
\[\frac{(2^{r+1}-2^0)(2^{r+1}-2^1) \cdots (2^{r+1}-2^{r-1})}{2^{r(r+1)}}.\]
We conclude that, with $p$ defined as in \cref{lem:linalg},
\begin{align*}
p &= \frac{(2^d-2^0)(2^d-2^1) \cdots (2^d-2^{d-r-1})}{2^{d(d-r)}} \cdot \frac{(2^{r+1}-2^0)(2^{r+1}-2^1) \cdots (2^{r+1}-2^{r-1})}{2^{r(r+1)}} \\
&= \frac{(2^d-2^0)(2^d-2^1) \cdots (2^d-2^{d-r-1})}{2^{d(d-r)}} \cdot \frac{(2^{d}-2^{d-r-1})(2^{d}-2^{d-r-2}) \cdots (2^{d}-2^{d-2})}{2^{dr}} \\
&= \frac{(2^d-2^0)(2^d-2^1) \cdots (2^d-2^{d-1})}{2^{d^2}} \cdot \frac{2^d-2^{d-r-1}}{2^d - 2^{d-1}} \\
&= (2 - 2^{-r})\frac{(2^d-2^0)(2^d-2^1) \cdots (2^d-2^{d-1})}{2^{d^2}} \\
&= (2 - 2^{-r}) \paren*{\frac{2^{d+1}-1}{2^{d+1}}}^d c_{d+1}.
\end{align*}
The result follows.
\end{proof}
It seems plausible to conjecture that the lower bounds in \cref{lem:2dlemma,lem:codim} are the best possible; specifically, that for every $d > 2$,
\[c(k, d) = \begin{cases}
c_d & 2^d \leq k < \frac{3}{2} \cdot 2^d \\
\frac{3}{2} c_{d+1} & \frac{3}{2} \cdot 2^d \leq k < \frac{7}{4} \cdot 2^d \\
\frac{7}{4} c_{d+1} & \frac{7}{4} \cdot 2^d \leq k < \frac{15}{8} \cdot 2^d \\
\mkern3mu\vdots & \\
(2 - 2^{-d})c_{d+1} & k = 2^{d+1}-1. \\
\end{cases}\]
A heuristic computer search was unable to disprove this conjecture in the cases $(k, d) = (9, 3), (10, 3)$. However, it should be noted that this statement for $d=2$ is in fact false by \cref{p31}. A safer conjecture to make may be that these bounds are optimal in the limit $d \to \infty$, i.e.\ for $1 \leq b < 2$, we have $\gamma_\infty(b) = (2 - 2^{\ceil{\log_2(2-b)}}) c$. This function is plotted in \cref{fig:2}.

\begin{figure}[tbp]
\centering
\begin{tikzpicture}[scale=5,null/.style={inner sep=0pt},ce/.style={inner sep=1pt,circle,draw,fill},oe/.style={ce,fill=white}]
\draw[->] (-0.1,-0.1)--(-0.1,1.1);
\draw[->] (-0.1,-0.1)--(1.1,-0.1) node[null,label={right:$b$}]{};

\draw (-0.1,0) node[null,label={left:$c$}]{} +(-0.5pt,0)--+(0.5pt,0);
\draw (-0.1,0.5) node[null,label={left:$\frac{3}{2}c$}]{} +(-0.5pt,0)--+(0.5pt,0);
\draw (-0.1,0.75) node[null,label={left:$\frac{7}{4}c$}]{} +(-0.5pt,0)--+(0.5pt,0);
\draw (-0.1,7/8) node[null,label={left:$\frac{15}{8}c$}]{} +(-0.5pt,0)--+(0.5pt,0);
\draw (-0.1,1) node[null,label={left:$2c$}]{} +(-0.5pt,0)--+(0.5pt,0);
\draw (0,-0.1) node[null,label={below:$1$}]{} +(0,-0.5pt)--+(0,0.5pt);
\draw (0.5,-0.1) node[null,label={below:$\frac{3}{2}$}]{} +(0,-0.5pt)--+(0,0.5pt);
\draw (0.75,-0.1) node[null,label={below:$\frac{7}{4}$}]{} +(0,-0.5pt)--+(0,0.5pt);
\draw (7/8,-0.1) node[null,label={below:$\frac{15}{8}$}]{} +(0,-0.5pt)--+(0,0.5pt);
\draw (1,-0.1) node[null,label={below:$2$}]{} +(0,-0.5pt)--+(0,0.5pt);

\draw[dotted] (-0.1,0)--(0,0)--(0,-0.1) (-0.1,0.5)--(0.5,0.5)--(0.5,-0.1) (-0.1,0.75)--(0.75,0.75)--(0.75,-0.1) (-0.1,7/8)--(7/8,7/8)--(7/8,-0.1) (-0.1,1)--(1,1)--(1,-0.1) (0,0)--(1,1) (0.5,0)--(1,1);

\draw[thick] (0,0)node[ce]{}--(0.5,0)node[oe]{} (0.5,0.5)node[ce]{}--(0.75,0.5)node[oe]{} (0.75,0.75)node[ce]{}--(7/8,3/4)node[oe]{};
\begin{scope}[shift={(7/8,7/8)},scale=1/8]
\begin{scope}[shift={(7/8,7/8)},scale=1/8]
\draw[thick] (0.75,0.75)node[ce]{}--(7/8,3/4)node[oe]{} (0.5,0.5)node[ce]{}--(0.75,0.5)node[oe]{} (0,0)node[ce]{}--(0.5,0)node[oe]{} (1,1) node[oe]{};
\end{scope}
\draw[thick] (0,0)node[ce]{}--(0.5,0)node[oe]{} (0.5,0.5)node[ce]{}--(0.75,0.5)node[oe]{} (0.75,0.75)node[ce]{}--(7/8,3/4)node[oe]{};
\end{scope}
\end{tikzpicture}
\caption{The function $b \mapsto (2 - 2^{\ceil{\log_2(2-b)}}) c$, which is both the best known lower bound and conjectured form for $\gamma_\infty(b)$ in the range $[1, 2)$.} \label{fig:2}
\end{figure}
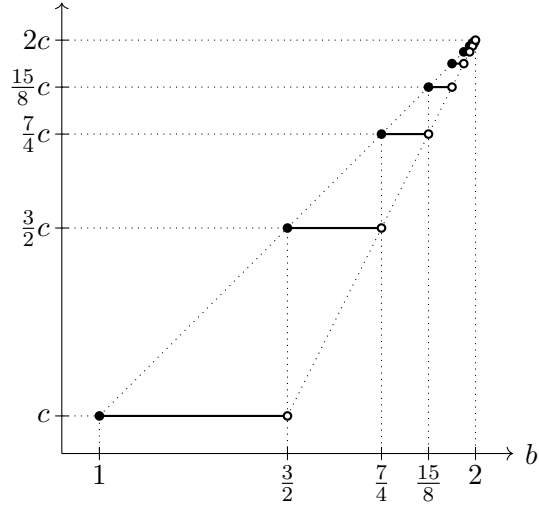

\subsection{Very large \texorpdfstring{\boldmath$k$}{k}} \label{sec:largek}
We start with the following observation:
\begin{lem}
Let $d \geq 1$, $k_1, k_2 \geq 2^d$, and $n_1, n_2 \geq d$ be integers. Then
\[(n_1n_2)^d - d!f(n_1n_2, k_1+k_2, d) \leq (n_1^d - d!f(n_1,k_1,d))(n_2^d - d!f(n_2,k_2,d)).\]
\end{lem}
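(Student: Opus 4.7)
The plan is to re-express the inequality in the form $g(n_1 n_2, k_1 + k_2, d) \leq g(n_1, k_1, d) \cdot g(n_2, k_2, d)$, where for brevity I set $g(n, k, d) \coloneq n^d - d! f(n, k, d)$. For a $k \times n$ binary matrix $M$, let $G(M)$ denote the number of ordered $d$-tuples of column indices $(c_1, \ldots, c_d) \in [n]^d$ whose corresponding $k \times d$ submatrix fails to be shattered. Any such tuple with a repeated index automatically produces a submatrix with a repeated column, hence with fewer than $2^d$ distinct rows, hence not shattered; so tuples of distinct indices yielding shattered submatrices number exactly $d!$ times the number of shattered $d$-subsets. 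Therefore $g(n, k, d) = \min_M G(M)$, the minimum taken over $k \times n$ binary matrices.

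I would then choose matrices $M_1, M_2$ of sizes $k_i \times n_i$ achieving $G(M_i) = g(n_i, k_i, d)$, and construct a matrix $M$ of size $(k_1 + k_2) \times (n_1 n_2)$ whose columns are indexed by pairs $(i, j) \in [n_1] \times [n_2]$: the column $(i, j)$ is obtained by stacking the $i$th column of $M_1$ on top of the $j$th column of $M_2$. For any ordered tuple $((i_1, j_1), \ldots, (i_d, j_d))$ of columns of $M$, the resulting $(k_1 + k_2) \times d$ submatrix decomposes into a $k_1 \times d$ piece drawn from $M_1$ (on columns $i_1, \ldots, i_d$) stacked on a $k_2 \times d$ piece drawn from $M_2$ (on columns $j_1, \ldots, j_d$). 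Its set of distinct row vectors is precisely the union of the sets of distinct rows of the two pieces.

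The key observation is then immediate: if either the $M_1$-piece or the $M_2$-piece is individually shattered, then its row set is already all of $\set{0,1}^d$, and so the combined submatrix is shattered. Contrapositively, if the combined tuple is non-shattered, then both the $M_1$-tuple $(i_1, \ldots, i_d)$ and the $M_2$-tuple $(j_1, \ldots, j_d)$ are non-shattered. Hence
\[
G(M) \leq G(M_1) \cdot G(M_2) = g(n_1, k_1, d) \cdot g(n_2, k_2, d),
\]
and combining this with $g(n_1 n_2, k_1 + k_2, d) \leq G(M)$ yields the claim.

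I do not expect a significant obstacle here; the argument is a direct product construction. The only bookkeeping point is to work throughout with ordered $d$-tuples (with repetitions allowed), rather than with $d$-subsets of columns, since this is precisely what makes the count over column-pairs factor across the two coordinates.
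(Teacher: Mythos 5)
Your proof is correct and is essentially the paper's argument: the paper phrases $G(M)/n^d$ as the probability that a uniformly random (independently chosen, repetition allowed) $d$-tuple of columns fails to be shattered, and uses the same product construction together with the same observation that the stacked submatrix is shattered as soon as either piece is. Your deterministic count of ordered tuples is just a rephrasing of that probabilistic bookkeeping, and all the steps (including the handling of repeated indices) check out.
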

\begin{proof}
Given a $k \times n$ matrix $M$, let $X_M$ be a random $k \times d$ matrix obtained by choosing $d$ columns of $M$ independently and uniformly at random. Note that if $M$ has $m$ shattered $k \times d$ submatrices, the probability that $X_M$ is shattered is precisely $d! m/n^d$.

Suppose $M_1$ and $M_2$ are $k_1 \times n_1$ and $k_2 \times n_2$ matrices with $f(n_1,k_1,d)$ and $f(n_2,k_2,d)$ shattered $k \times d$ submatrices, respectively. Then, let $M$ be the $(k_1 + k_2) \times (n_1n_2)$ matrix whose columns are the concatenations of any column of $M_1$ and any column of $M_2$. It is evident that $X_M$ consists of $X_{M_1}$ stacked on top of an independent $X_{M_2}$, so
\[\setp[X_M \text{ not shattered}] \leq \setp[X_{M_1} \text{ not shattered}] \cdot \setp[X_{M_2} \text{ not shattered}].\]
The result follows after some algebra.
\end{proof}
Taking the limit $n \to \infty$, we conclude that $(1-c(k_1+k_2, d)) \leq (1-c(k_1, d)) (1-c(k_2, d))$, or equivalently, $(1-\gamma_d(b_1+b_2)) \leq (1-\gamma_d(b_1))(1-\gamma_d(b_2))$. Taking the limit $d \to \infty$ shows that $(1-\gamma_\infty(b_1+b_2)) \leq (1-\gamma_\infty(b_1))(1-\gamma_\infty(b_2))$ as well. Applying Fekete's lemma to $\log(1-\gamma_d(-))$ for possibly infinite $d$, we find that either $1-\gamma_d(b) = \beta_d^{-(1+o(1))b}$ for some finite $\beta_d = \sup_{b \geq 1}(1-\gamma_d(b))^{-1/b}$ or $1-\gamma_d(b)$ decays superexponentially. \cref{p31} not only tells us that $\beta_2 = 16$, but also rules out superexponential decay for all $d \geq 2$ as $1 - \gamma_d(b) \geq 1-\gamma_2(b)$. Note that $\gamma_2(b) \geq \gamma_3(b) \geq \cdots \geq \gamma_\infty(b)$ implies that $\beta_2 \geq \beta_3 \geq \cdots \geq \beta_\infty$.

The following simple observation ends up outperforming all other lower bounds considered in this paper when $b$ is larger than a constant times $d$.
\begin{prop} \label{prop:random}
$\beta_d \geq (1-2^{-d})^{-2^d} = e(1 + (\frac{1}{2} + o(1))2^{-d})$.
\end{prop}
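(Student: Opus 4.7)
The plan is to apply the probabilistic method with a fully random matrix. Consider a $k \times n$ binary matrix $M$ whose entries are independent and uniformly random in $\set{0,1}$. For any fixed choice of $d$ columns, the resulting $k \times d$ submatrix has $k$ independent rows each uniform on $\set{0,1}^d$, so for each $v \in \set{0,1}^d$ the probability that $v$ fails to appear as a row is $(1-2^{-d})^k$. A union bound over the $2^d$ vectors yields
\[\Pr[\text{submatrix is shattered}] \geq 1 - 2^d(1-2^{-d})^k.\]
Summing over all $\binom{n}{d}$ choices of column subsets gives $f(n,k,d) \geq \binom{n}{d}\bigl(1 - 2^d(1-2^{-d})^k\bigr)$, and combining this with the upper bound $f(n,k,d) \leq \lambda(H(k,d))n^d$ from \cref{l21} and letting $n \to \infty$ yields
\[c(k,d) \geq 1 - 2^d(1-2^{-d})^k.\]

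Next, setting $k = \floor{2^d b}$ we get $1 - \gamma_d(b) \leq 2^d(1-2^{-d})^{\floor{2^d b}}$, so
\[(1-\gamma_d(b))^{-1/b} \geq 2^{-d/b}(1-2^{-d})^{-\floor{2^d b}/b}.\]
As $b \to \infty$, the prefactor $2^{-d/b}$ tends to $1$ and the exponent $\floor{2^d b}/b$ tends to $2^d$, so the right-hand side approaches $(1-2^{-d})^{-2^d}$. Since $\beta_d = \sup_{b \geq 1}(1-\gamma_d(b))^{-1/b}$ by the Fekete argument described before the proposition, the supremum dominates the limit, giving $\beta_d \geq (1-2^{-d})^{-2^d}$.

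Finally, to obtain the asymptotic form, apply the Taylor expansion $-\log(1-x)/x = 1 + x/2 + O(x^2)$ at $x = 2^{-d}$, which after exponentiating yields $(1-2^{-d})^{-2^d} = e(1 + (\tfrac{1}{2} + o(1))2^{-d})$. There is no serious obstacle here: the crux is simply recognizing that the crude union bound on coupon-collector-style shattering probabilities already suffices, and that the supremum defining $\beta_d$ is attained in the regime $b \to \infty$ where the prefactor $2^{-d/b}$ washes out.
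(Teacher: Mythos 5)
Your proposal is correct and follows essentially the same route as the paper: a uniformly random matrix, a union bound over the $2^d$ missing rows to get $c(k,d) \geq 1 - 2^d(1-2^{-d})^k$, and then passage to $\beta_d$ via the supremum over $b$. You merely spell out in more detail the final step (taking $b \to \infty$ so that the prefactor $2^{-d/b}$ and the floor both wash out), which the paper compresses into ``the result follows.''
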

\begin{proof}
Consider a uniformly random $k \times d$ binary matrix. The probability that a fixed element of $\set{0,1}^d$ does not appear as a row is $(1-2^{-d})^k$, so the matrix is shattered with probability at least $1 - 2^d(1-2^{-d})^k$. By picking uniformly random $k \times n$ binary matrices for large $n$ (or, equivalently, by plugging in a constant vector to the Lagrangian polynomial), we find that $c(k, d) \geq 1 - 2^d(1-2^{-d})^k$. The result follows.
\end{proof}

It is in fact possible to squeeze a bit more out of this idea by slightly optimizing the random process.
\begin{prop} \label{prop:balanced}
$\beta_d \geq \sup_{t \in \setr} {\paren{(\cosh t)^d - e^{dt}/2^d}^{-2^d} = e(1 + (\frac{d+1}{2} + o(1))2^{-d}})$.
\end{prop}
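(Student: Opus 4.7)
The plan is to refine the random process of Proposition~\ref{prop:random}. Rather than taking each column of the $k \times n$ matrix to be uniformly distributed on $\set{0,1}^k$, I would draw each column independently and uniformly from the set of \emph{balanced} vectors, namely those of Hamming weight $\lfloor k/2 \rfloor$ (assuming $k$ even for concreteness; the general case is analogous). Each entry is still Bernoulli$(1/2)$ marginally, but the column-weight constraint induces a small negative correlation between distinct rows of any $k \times d$ submatrix, and this is what produces the improvement over the uniform-column bound $2^d(1-2^{-d})^k$.

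For a fixed pattern $r \in \set{0,1}^d$, the bit-flip symmetry of the balanced distribution makes $P(r \text{ missing})$ independent of $r$, and equal to the probability that the entrywise AND of $d$ i.i.d.\ uniform balanced columns is $0^k$. Inclusion--exclusion over the $k$ rows gives
\[
P(r \text{ missing}) = \sum_{a \geq 0}(-1)^a \binom{k}{a} \paren*{\frac{\binom{k-a}{k/2 - a}}{\binom{k}{k/2}}}^d.
\]
Using the asymptotic $\binom{k-a}{k/2-a}/\binom{k}{k/2} = 2^{-a}(1 - a(a-1)/(2k) + O(a^4/k^2))$ and carefully summing shows that $\log P(r \text{ missing}) = -k \cdot 2^{-d} - k(d+1)/(2 \cdot 4^d) + o(k/4^d)$ as $k \to \infty$. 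A union bound over the $2^d$ patterns then gives $P(\text{not shattered}) \leq 2^d e^{-k \cdot 2^{-d} - k(d+1)/(2 \cdot 4^d)}(1+o(1))$. Setting $k = 2^d b$ and taking $b$-th roots in the limit $b \to \infty$ yields $\beta_d \geq e^{1 + (d+1)/2^{d+1}}$.

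To match this with the claimed supremum, let $A(t) = (\cosh t)^d - e^{dt}/2^d$. A direct Taylor expansion near $t = 0$ (solving $A'(t^*) = 0$ gives $t^* \approx 2^{-d}$, and expanding $\log A(t^*) = d \log \cosh t^* + \log(1 - e^{dt^*}/(2\cosh t^*)^d)$ through order $4^{-d}$) yields $\log A(t^*) = -2^{-d} - (d+1)/(2 \cdot 4^d) + O(8^{-d})$, so $\sup_t A(t)^{-2^d} = A(t^*)^{-2^d} = \exp(1 + (d+1)/2^{d+1} + O(2^{-2d})) = e(1 + ((d+1)/2 + o(1)) \cdot 2^{-d})$, matching both the balanced-column bound and the claimed expansion. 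The main obstacle lies in the inclusion--exclusion expansion: the coefficient $(d+1)/2$ arises as the sum of a ``$d$'' contribution (the quadratic correction in the binomial-ratio asymptotic) and a ``$+1$'' contribution (the $\eta^2/2$ term in $\log(1-\eta) = -\eta - \eta^2/2 - \cdots$), and both must be tracked precisely to recover the stated constant.
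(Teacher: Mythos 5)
Your construction (columns drawn independently and uniformly from the weight-$k/2$ vectors), the reduction to a single missing pattern via a union bound over the $2^d$ patterns, the bit-flip symmetry making all patterns equivalent, and the final Taylor expansion of $\log\bigl((\cosh t)^d - e^{dt}/2^d\bigr)$ at its minimizer all match the paper's proof. The difference is in how you bound $P(r\text{ missing})$, and that is where there is a genuine gap. You write this probability as the exact alternating sum $\sum_a(-1)^a\binom{k}{a}\bigl(\binom{k-a}{k/2-a}/\binom{k}{k/2}\bigr)^d$ and propose to extract second-order asymptotics by substituting the termwise expansion $2^{-a}(1-a(a-1)/(2k)+O(a^4/k^2))$. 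But to deduce a bound on $\beta_d=\sup_b(1-\gamma_d(b))^{-1/b}$ you must let $b=k/2^d\to\infty$ for fixed $d$, and in that regime the individual terms $\binom{k}{a}2^{-ad}\approx b^a/a!$ reach size about $e^{b}$ near $a\approx b$ while the whole sum has size about $e^{-b}$: the cancellation is by a factor of roughly $e^{2b}$, so termwise relative errors of size $O(a^4/k^2)$ cannot simply be summed, since the error terms need not inherit the cancellation. (The first correction $-\frac{d}{2k}\sum_a(-1)^a\binom{k}{a}a(a-1)2^{-ad}$ does evaluate in closed form to the right quantity, but the higher-order remainders do not, and the expansion is not even valid for $a$ comparable to $k/2$.) Making this rigorous needs Bonferroni truncation plus a separate control of the accumulated corrections; ``carefully summing'' is exactly the missing step. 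A secondary issue: even granting your claimed expansion of $\log P$, you obtain only the asymptotic form $\beta_d\geq e^{1+(d+1)2^{-d-1}}$, which differs from the stated $\sup_t\bigl((\cosh t)^d-e^{dt}/2^d\bigr)^{-2^d}$ by a factor $\exp(O(4^{-d}))$ of either sign, so the exact inequality for fixed $d$ is not established.

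The paper sidesteps all of this with a Chernoff-type coefficient bound. The number of balanced matrices missing the all-ones row is the coefficient of $(x_1\cdots x_d)^{k/2}$ in $((1+x_1)\cdots(1+x_d)-x_1\cdots x_d)^k$; since this polynomial has nonnegative coefficients, that coefficient is at most $((1+x_1)\cdots(1+x_d)-x_1\cdots x_d)^k/(x_1\cdots x_d)^{k/2}$ for any positive $x_i$. Setting $x_i=e^{2t}$ and dividing by $\binom{k}{k/2}^d=2^{dk+o_d(k)}$ gives $P\leq\bigl((\cosh t)^d-e^{dt}/2^d\bigr)^k\exp(o_d(k))$ in one step, with no alternating sum to control and with the $\sup_t$ appearing exactly. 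If you want to retain your route, replace the inclusion--exclusion computation by this coefficient bound, or supply the truncation argument it is standing in for.
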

\begin{proof}
Let $\beta'_d = \sup_{t \in \setr} \paren{(\cosh t)^d - e^{dt}/2^d}^{-2^d}$. Suppose $k$ is even and choose a uniformly random $k \times d$ matrix subject to the condition that all columns have exactly $k/2$ zeros and $k/2$ ones (call a column \emph{balanced} if this is true and a matrix balanced if all its columns are balanced). It suffices to show that the probability $p$ that this matrix is not shattered is at most $(\beta'_d)^{-k/2^d}\exp(o_d(k))$, since then picking uniformly random balanced $k \times n$ matrices yields $c(k,d) \geq 1 - (\beta'_d)^{-k/2^d}\exp(o_d(k))$ for even $k$.

The number of balanced $k \times d$ matrices $\binom{k}{k/2}^d = 2^{dk+o_d(k)}$. Since toggling a column does not change whether it is balanced, to bound the number of balanced $k \times d$ matrices that are not shattered, it suffices to count balanced $k \times d$ matrices that lack an all-ones row, and then multiply by $2^d$.

Treating a $k \times d$ matrix as an $k$-tuple of its rows, the number of balanced $k \times d$ matrices without an all-ones rows is exactly the coefficient of $(x_1\cdots x_d)^{k/2}$ in the generating function $((1 + x_1) \cdots (1+x_d) - x_1 \cdots x_d)^k$. Thus the number of such matrices is at most
\[\frac{((1 + x_1) \cdots (1+x_d) - x_1\cdots x_d)^k}{(x_1\cdots x_d)^{k/2}}\]
for any choice of positive $x_1,\ldots,x_d$. Setting all the $x_i$ to be equal to $e^{2t}$, this is
\[\frac{(2^de^{dt}(\cosh t)^d  - e^{2dt})^k}{e^{dkt}} = 2^{dk}(\cosh(t)^d  - e^{dt}/2^d)^k.\]
Putting everything together, we find that
\[p \leq \frac{2^d2^{dk}(\cosh t)^d  - e^{dt}/2^d)^k}{2^{dk+o_d(k)}} = (\cosh(t)^d  - e^{dt}/2^d)^{k} \exp(o_d(k)).\]
By optimizing the choice of $t$, we get the desired bound.

We now compute the asymptotics of $\beta'_d$. Let $f(t) = (\cosh t)^d - e^{dt}/2^d$; by expanding out $(\cosh t)^d$, one can show that $f(t)$ is a positive linear combination of exponentials and is thus convex. Now, after computing
\[f'''(t) = d(3d-2)\sinh t(\cosh t)^{d-1} + d(d-1)(d-2) (\sinh t)^3(\cosh t)^{d-3} - d^3e^{dt}/2^d,\]
we find that for $\abs{t} \leq d^{-100}$, we have $\abs{f'''(t)} \leq 1$ for large enough $d$. Therefore, by Taylor's theorem, we find that for large $d$ and $\abs{t} \leq d^{-100}$, we have $\abs{f(t) - g(t)} = O(t^3)$ and $\abs{f'(t) - g'(t)} = O(t^2)$, where
\[g(t) = 1+dt^2/2-\frac{1-dt+d^2t^2/2}{2^d} = (1-2^{-d}) - d2^{-d} \cdot t + \frac{d + d^2/2^d}{2} \cdot t^2\]
is the second-degree Taylor polynomial of $f(t)$ at $t = 0$.

Computing $g'(t) = (d+d^2/2^d) t - d2^{-d}$, we find that if we define $t_\pm = 2^{-d} \pm 2^{-1.5d}$, both $g'(t_+)$ and $-g'(t_-)$ are $\Omega(2^{-1.5d})$. Thus, $f'(t_+)$ and $-f'(t_-)$ are also $\Omega(2^{-1.5d})$, so by convexity $f$ must be minimized in the interval $[t_-, t_+]$ for large $d$. Moreover, $g(t)$ is minimized at $t_0 = 2^{-d} + O(2^{-2d}) \in [t_-, t_+]$, and $g(t_0) = 1 - 2^{-d} - (d/2 + o(1)) 2^{-2d}$. Therefore the minimum of $f$, which is $(\beta'_d)^{-1/2^d}$, is $g(t_0) + O(2^{-3d}) = 1 - 2^{-d} - (d/2 + o(1)) 2^{-2d}$. It is now straightforward to compute
\[\log \beta'_d = -2^d \cdot \paren*{-2^d - \frac{d}{2} 2^{-2d} - \frac{1}{2} 2^{-2d}+ o(2^{-2d})} = 1 + \paren*{\frac{d+1}{2} + o(1)} 2^{-d}.\]
The result follows.
\end{proof}
\begin{rmk}
By the theory of large deviations in probability, this bound on $\beta_d$ is in fact the best possible for this probabilistic procedure.
\end{rmk}

\begin{rmk}
Although we have proved bounds on various $\beta_d$, it may not be the case that $\beta_\infty = \lim_{d \to\infty} \beta_d$. For instance, the functions $\min(2^{-x}, 3^{-x}), \min(2^{-x}, 3^{1-x}), \min(2^{-x}, 3^{2-x}), \ldots$ are each individually $3^{-(1+o(1))x}$, but their pointwise limit is exactly $2^{-x}$. The best bound we know for $\beta_\infty$ comes from using \cref{lem:codim} to conclude $c(2^{d+1},d) \geq (2 - 2^{-d}) c_{d+1}$, which implies that $\gamma_\infty(2) \geq 2c$ and thus $\beta_\infty \geq (1-2c)^{-1/2} \approx 1.539$.
\end{rmk}

\section{Concluding Remarks} \label{sec:final}
\subsection{Minimum shattering for fixed \texorpdfstring{\boldmath $d$}{d}} \label{sec:minshatter}
As mentioned in the introduction, the problem of determining 
$g(n,k,d)$, which is the \emph{minimum} possible number of 
subsets of size $d$ of $[n]$ which are shattered by
a family $\FF$ of $k$ distinct subsets of $[n]$, is much simpler
than that of determining $f(n,k,d)$. An explicit formula for
the value of $g(n,k,d)$ is somewhat complicated, we illustrate the 
way of computing it by describing the formula for some range of the
parameters. Writing $\binom{n}{{<}d}=\sum_{i=0}^{d-1}\binom{n}{i}$, 
let $r \in [d, n]$ and suppose that $k$ satisfies
\begin{multline*}
\binom{n}{{<}d}+\binom{r}{d} +\brac*{\binom{r}{d+1}-1}
+\brac*{\binom{r}{d+2}-1} + \cdots + \brac*{\binom{r}{r-1}-1} \leq k
\\
\leq 
\binom{n}{{<}d} +\binom{r}{d} +\binom{r}{d+1}
+\binom{r}{d+2} + \cdots +\binom{r}{r-1} + \binom{r}{r}.
\end{multline*}
We claim that in this range $g(n,k,d)=\binom{r}{d}$.
To prove the upper bound it suffices to to establish it for the upper limit of this range,
since $g(n,k,d)$ is clearly weakly increasing in $k$. Let $\FF$
be the family of all subsets of size at most $d-1$ of $[n]$
together with all subsets of $[r]$. Then $\abs{\FF}=k $ and the $d$-subsets of
$[n]$ it shatters are exactly all $d$-subsets of $[r]$. To prove
the lower bound it suffices to prove that any family $\FF \subseteq 2^{[n]}$ of size 
\[
\binom{n}{{<}d}+\binom{r}{d} +\brac*{\binom{r}{d+1}-1}
+\brac*{\binom{r}{d+2}-1} + \cdots + \brac*{\binom{r}{r-1}-1}
\] 
shatters at least $\binom{r}{d}$ subsets of size $d$ of
$[n]$. By the result of Pajor mentioned in the introduction,
$\FF$ shatters at least $\abs{\FF}$ subsets of $[n]$. Note that the
family of all shattered subsets forms a simplicial complex, namely,
it is closed under taking subsets. This complex contains
at most $\binom{n}{{<}d}$ subsets of size at most $d-1$. If it
contains a subset of size $r'$ for some $r' \geq r$, then it
contains at least $\binom{r'}{d}\geq \binom{r}{d}$ subsets of
size $d$, as needed. Similarly, if it contains at least
$\binom{r}{i}$ subsets of size $i$ for some $i \geq d$,
then
by the Kruskal-Katona Theorem it contains at least $\binom{r}{d}$
subsets of size $d$, as required. If none of these
conditions holds, then
\[
|\FF| \leq \binom{n}{{<}d}+\brac*{\binom{r}{d}-1} +\brac*{\binom{r}{d+1}-1}
+\brac*{\binom{r}{d+2}-1} + \cdots + \brac*{\binom{r}{r-1}-1}.
\]
which is smaller than the assumed size. This completes the 
proof of the claim providing an explicit formula for  
$g(n,k,d)$ in this range.

In general, the optimal construction comes from first putting all subsets of $[n]$ with size less than $d$ in $\FF$, and then adding the remaining subsets in lexicographic order, without regard to their size.

\subsection{Larger alphabets} \label{sec:alpha}
This problem, in the binary matrix formulation, naturally generalizes to an alphabet of size $v$. Most of the arguments in this paper generalize, with two main exceptions. First of all, we do not have an exact analogue of \cref{l23}, so the $d = 2$ case is significantly more mysterious. We note, however, that an asymptotic 
version of the analogue of \cref{l23} 
has been obtained by Gargano, K\"orner, and Vaccaro
\cite{GKV} using an elegant construction motivated by techniques from
information theory.

Second, unless $v$ is a prime or a prime power, constructions involving finite field linear algebra stop working. Nonetheless, it is still possible to salvage something. Letting $f_v(n,k,d)$ be the natural generalization of $f(n,k,d)$ to an alphabet of size $v$, we have the following:
\begin{prop} \label{prop:multv}
$f_{v_1v_2}(n_1n_2,k_1k_2, d) \geq d! f_{v_1}(n_1,k_1, d) f_{v_2}(n_2,k_2, d)$
\end{prop}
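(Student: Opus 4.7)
The plan is to use a tensor-product construction generalizing the binary-matrix correspondence from \cref{sec:general}. By analogy, $f_v(n,k,d)$ is the largest number of shattered $k \times d$ submatrices in a $k \times n$ matrix over $[v]$, where ``shattered'' means every one of the $v^d$ possible row vectors appears. Let $M_1$ and $M_2$ be $k_1 \times n_1$ and $k_2 \times n_2$ matrices attaining $f_{v_1}(n_1,k_1,d)$ and $f_{v_2}(n_2,k_2,d)$, respectively. Define $M$ to be the $(k_1 k_2) \times (n_1 n_2)$ matrix over $[v_1] \times [v_2] \cong [v_1 v_2]$ with rows indexed by $[k_1] \times [k_2]$, columns by $[n_1] \times [n_2]$, and entries
\[M[(r_1,r_2),(c_1,c_2)] = (M_1[r_1,c_1],\, M_2[r_2,c_2]).\]

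The key observation is that a $(k_1 k_2) \times d$ submatrix of $M$ selected by columns $\set{(c_1^{(i)}, c_2^{(i)})}_{i=1}^d$ is shattered if and only if the $c_1^{(i)}$ are pairwise distinct and form a shattered submatrix of $M_1$, and the $c_2^{(i)}$ are pairwise distinct and form a shattered submatrix of $M_2$. The ``if'' direction exploits the fact that the first and second coordinates of the row vector indexed by $(r_1,r_2)$ depend only on $r_1$ and $r_2$ respectively: given any desired row $((a^{(i)}, b^{(i)}))_{i=1}^d$, one independently picks $r_1$ realizing $(a^{(i)})_i$ using shattering of the $M_1$-submatrix and $r_2$ realizing $(b^{(i)})_i$ using shattering of the $M_2$-submatrix. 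The ``only if'' direction observes that a coincidence $c_1^{(i)} = c_1^{(j)}$ forces the first coordinates at positions $i$ and $j$ to agree in every row of $M$, ruling out shattering when $v_1 \geq 2$; the degenerate case $v_1 = 1$ (and likewise $v_2 = 1$) is trivial and handled separately.

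Finally, I would count ordered $d$-tuples of columns. For each $\ell \in \set{1,2}$, the number of ordered $d$-tuples of distinct columns of $M_\ell$ forming a shattered submatrix equals $d!\, f_{v_\ell}(n_\ell, k_\ell, d)$. By the characterization above, pairing any such tuple for $\ell = 1$ with any such tuple for $\ell = 2$ coordinatewise yields an ordered $d$-tuple of columns of $M$, automatically distinct in $[n_1] \times [n_2]$, that forms a shattered submatrix. This gives at least $(d!)^2 f_{v_1}(n_1,k_1,d) f_{v_2}(n_2,k_2,d)$ such ordered tuples, and since each unordered $d$-subset is counted $d!$ times, dividing yields the stated bound. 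No substantial obstacle is expected; the only point requiring care is verifying that the shattering condition factors cleanly across the tensor product and ruling out the repeated-column degeneracy in the ``only if'' direction.
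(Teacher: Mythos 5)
Your proposal is correct and follows essentially the same route as the paper: the identical tensor-product construction $M_{(r_1,r_2),(c_1,c_2)} = (M_1[r_1,c_1], M_2[r_2,c_2])$, the observation that shattering factors across the two coordinates, and the count that each pair of shattered submatrices of $M_1$ and $M_2$ yields $d!$ distinct shattered submatrices of $M$ (your ordered-tuple bookkeeping is just a slightly more explicit version of this). The ``only if'' direction of your characterization is not needed for the lower bound, but it is harmless and correctly argued.
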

\begin{proof}
Consider matrices $M_1 \in [v_1]^{k_1 \times n_1}$ and $M_2 \in [v_2]^{k_2 \times n_2}$ with $f_{v_1}(n_1,k_1, d)$ and $f_{v_2}(n_2,k_2, d)$ shattered submatrices, respectively. Let $M \in ([v_1] \times [v_2])^{k_1k_2 \times n_1n_2}$ be such that for $i_1 \in [k_1]$, $i_2 \in [k_2]$, $j_1 \in [n_1]$, and $j_2 \in [n_2]$, we have
\[M_{(i_1,i_2),(j_1,j_2)} = ((M_1)_{i_1,j_1}, (M_2)_{i_2,j_2}).\]
One can check that if the $k_1 \times d$ submatrix of $M_1$ given by columns $j_1,\ldots,j_d$ and the $k_2 \times d$ submatrix of $M_2$ given by columns $j'_1,\ldots,j'_d$ are both shattered, the $k_1k_2 \times d$ submatrix of $M$ given by columns $(j_1,j'_1),\ldots,(j_d,j'_d)$ is shattered. This proves the desired bound, as there are $d!$ ways to combine a pair of shattered submatrices of $M_1$ and $M_2$.
\end{proof}

As a corollary, we find that, after defining $c_v(k,d)$ and $\gamma_{v,d}(s)$ to be the natural generalizations of $c(k,d)$ and $\gamma_d(s)$, we have $c_{v_1v_2}(k_1k_2, d) \geq c_{v_1}(k_1, d) c_{v_2}(k_2, d)$ and $\gamma_{v_1v_2,d}(s_1s_2) \geq \gamma_{v_1,d}(s_1)\gamma_{v_2,d}(s_2)$. In particular, $\gamma_{v,\infty}(1) > 0$ for all $v$, since we can write every $v$ as a product of prime powers.

An interesting phenomenon which occurs for $v \geq 2$ is that the best known bounds for $\lim_{d\to\infty}\beta_{v,d}$ and $\beta_{v,\infty}$ depend on the factorization of $v$. Completely random constructions (see \cref{prop:random}) yield $\lim_{d\to\infty}\beta_{v,d} \geq e$ unconditionally, while combining linear-algebraic constructions and \cref{prop:multv} yields
\[\lim_{d\to\infty}\beta_{v,d} \geq \beta_{v,\infty} \geq \frac{1}{1-\gamma_{v,\infty}(1)} \geq \frac{1}{1 - \prod_q \prod_{i=1}^\infty (1-q^{-i})},\]
where the product is over maximal prime powers $q$ that divide $v$. This is $v-1+o(1)$ for large prime power $v$, and is larger than $e$ for all prime powers $v \geq 4$, as well as for some $v$ that are not prime powers but products of large prime powers, the smallest of which is $v=35$. Moreover, if $v \equiv 2 \pmod{4}$, using the fact that $\gamma_{2,\infty}(2) \geq 2c$ yields
\[\beta_{v,\infty} \geq \frac{1}{\sqrt{1-\gamma_{v,\infty}(2)}} \geq \frac{1}{\sqrt{1 - 2\prod_q \prod_{i=1}^\infty (1-q^{-i})}},\]
which is always better as $\sqrt{1-2a} < 1-a$ for $a \in (0,1/2)$. However, in this case, since $(1-2c)^{-1/2} < e$, this bound is always less than $e$ and thus does not improve on the random construction for $\lim_{d\to\infty}\beta_{v,d}$.
\subsection{Application to covering arrays} \label{sec:cover}
An \emph{$(k;d,n,v)$-covering array}\footnote{Our usage of the parameters $n$ and $k$ is unfortunately swapped from the standard literature.} is a matrix in $[v]^{k \times n}$ such that every $k \times d$ submatrix is shattered. It is easy to see that if $M \in [v]^{k \times n}$ has $m < n$ submatrices that are $k \times d$ and not shattered, then a $(k;d,n-m,v)$-covering array exists, since we can just delete one column from every submatrix that is not shattered. This observation is enough to prove the following:
\begin{prop} \label{prop:cov}
For fixed $d,v\geq 2$, a $(k;d,n,v)$-covering array exists whenever
\[k \leq (1 + o(1)) \frac{(d-1)v^d}{\log_2 \beta_{v,d}} \log_2 n.\]
\end{prop}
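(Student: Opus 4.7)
The plan is to combine the construction of $k \times N$ matrices with few non-shattered $k \times d$ submatrices (via the $v$-alphabet analogue of \cref{l21}) with the column-deletion trick observed at the beginning of this subsection. Specifically, the $v$-analogue of \cref{l21} guarantees, for every $N \geq d$, the existence of a $k \times N$ matrix over $[v]$ with at least $c_v(k, d) \binom{N}{d}$ shattered $k \times d$ submatrices, and hence at most $m \coloneq \floor{(1 - c_v(k, d)) \binom{N}{d}}$ non-shattered ones. Deleting one column from each non-shattered submatrix yields a $(k; d, N - m, v)$-covering array. So to produce a $(k; d, n, v)$-covering array it suffices to find an integer $N \geq n$ satisfying $(1 - c_v(k, d)) \binom{N}{d} \leq N - n$.

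Setting $N = 2n$ and bounding $\binom{2n}{d} \leq (2n)^d / d!$, this condition reduces to $1 - c_v(k, d) \leq \frac{d!}{2^d n^{d-1}}$. The $v$-analogue of the Fekete-lemma argument of \cref{sec:largek}, using the superadditivity of $-\log(1 - \gamma_{v, d}(b))$, gives $1 - c_v(k, d) = \beta_{v, d}^{-(1 + o(1)) k / v^d}$ as $k \to \infty$. Taking logarithms base $2$ in the reduced inequality yields
\[(1 + o(1)) \frac{k \log_2 \beta_{v, d}}{v^d} \geq (d - 1) \log_2 n - \log_2 \frac{d!}{2^d},\]
which rearranges to the matching threshold $k \approx (1 + o(1)) \frac{(d-1) v^d}{\log_2 \beta_{v, d}} \log_2 n$. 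Thus at (or just above) this threshold value of $k$, the construction succeeds and produces a covering array with row count matching the bound in the proposition; the result for smaller $k$ then follows from the trivial monotonicity that deleting rows of a covering array does not affect which submatrix shape is being considered.

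The main technical subtlety is the interaction between the two asymptotic regimes: the Fekete-derived bound on $1 - c_v(k, d)$ is valid as $k \to \infty$, whereas the proposition is asymptotic in $n$. This is not an issue because the derived threshold for $k$ grows like $\Theta(\log n)$, so $k$ automatically tends to infinity along with $n$. Careful bookkeeping is then required to absorb the lower-order contributions (from the floor in $m$, the $\binom{2n}{d} \leq (2n)^d / d!$ approximation, and the $\beta_{v, d}$ expansion) into the $(1 + o(1))$ factor in the final bound; the freedom to choose any constant $\alpha > 1$ in place of $2$ when setting $N = \alpha n$ shows that none of this slack affects the leading-order constant.
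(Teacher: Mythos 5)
Your argument follows the paper's proof essentially step for step: the $v$-ary analogue of \cref{l21} produces a $k \times N$ matrix with at most $(1-c_v(k,d))\binom{N}{d}$ non-shattered $k\times d$ submatrices, one column is deleted per bad submatrix, and the Fekete-lemma asymptotic $1-c_v(k,d)=\beta_{v,d}^{-(1+o(1))k/v^d}$ from \cref{sec:largek} converts the resulting inequality into the stated threshold; choosing $N=2n$ rather than the paper's $n'=\lfloor(1-c_v(k,d))^{-1/(d-1)}\rfloor$ is an immaterial difference, and your observation that $k=\Theta(\log n)\to\infty$ justifies invoking the $k\to\infty$ asymptotic. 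The only misstep is your closing monotonicity claim: deleting rows of a covering array can destroy the covering property (a shattered $k\times d$ submatrix need not remain shattered after losing rows), so existence does not propagate downward in $k$. The correct monotone operations are adding (e.g.\ duplicating) rows and deleting columns, which is what the paper invokes; this is also all that is needed once the proposition is read, as intended, as an upper bound on the number of rows sufficient to cover $n$ columns rather than as an assertion for every smaller $k$ (which would be false already for $k<v^d$).
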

\begin{proof}
By \cref{l21} and the above observation, a $(k;d,n,v)$-covering array exists if $n \leq n' - (1-c_v(k, d))\binom{n'}{d}$ for some positive integer $n'$. Choosing $n' = \floor{(1 - c_v(k,d))^{-1/(d-1)}}$ yields $n = \Omega((1 - c_v(k,d))^{-1/(d-1)})$. Together with monotonicity (we can freely add rows and delete columns), we get the desired after some manipulation.
\end{proof}
\cref{prop:cov}, together with bounds on the $\beta_{v,d}$, reproduce a number of results from the literature. \cref{prop:random} recovers a result of Goldbole, Skipper, and Sunley \cite{GSS} originally proved using the Lov\'{a}sz local lemma. Moreover, \cref{prop:balanced}, which generalizes to
\[\beta_{v,d} \geq \sup_{t \in \setr} {\paren*{\frac{(e^{(v-1)t}+(v-1)e^{-t})^d - e^{(v-1)dt}}{v^d}}^{-v^d}} = e\paren*{1 + \paren*{\frac{(v-1)d+1}{2} + o_v(1)}v^{-d}},\]
recovers, in the case $(v,d) = (2,3)$, results proved independently by Roux \cite{Roux} and  Graham, Harary, Livingston, and Stout \cite{GHLS}. In the general case, we reproduce a result of Franceti\'{c} and Stevens \cite{FS}. Despite being numerically the same, our result is expressed in a much simpler form, and as a result we are able to provide an asymptotic which is absent in \cite{FS}.

Finally, Das and M\'{e}sz\'{a}ros \cite{DM18} use the fact that $\gamma_{v,\infty}(1) = \prod_{i=1}^\infty (1-v^{-i})$ for prime power $v$ to construct covering arrays. However, they do not use \cref{prop:multv}, which, as previously mentioned, allows one to improve on the random construction when $v$ is a product of large prime powers.

\section*{Acknowledgments}
We thank Shagnik Das and Tam\'as M\'esz\'aros for informing
us about their paper \cite{DM18} after we posted the original version of
the present paper in the arXiv.
\cref{t12} and one of its two proofs in this original version,
as well as the connection to covering arrays, appeared
earlier in \cite{DM18}. Our second proof, \cref{t13,t13a}, and some improved
bounds for covering arrays for certain alphabet sizes are new.

\printbibliography
\end{document}